\documentclass[10pt]{article}
\usepackage[margin=1.5in]{geometry}  % set the margins to 1in on all sides
\usepackage{amsmath}
\usepackage{amsfonts}
\usepackage{amssymb}
\usepackage{amsthm}
\usepackage{cases}
\usepackage{stmaryrd}
\usepackage{latexsym}
\usepackage{mathrsfs}
\usepackage{graphicx}
\usepackage{verbatim}

\newtheorem{thm}{Theorem}[section]
\newtheorem{cor}[thm]{Corollary}
\newtheorem{lem}[thm]{Lemma}
\newtheorem{prop}[thm]{Proposition}

\newtheorem{rem}[thm]{Remark}

\numberwithin{equation}{section}

\def\C{\mathbb{C}}
\def\E{\mathbb{E}}
\def\N{\mathbb{N}}
\def\P{\mathbb{P}}

\def\R{\mathbb{R}}

\def\Z{\mathbb{Z}}

\def\BB{\mathcal{B}}

\def\SS{\mathcal{S}}

\def\LLL{\mathscr{L}}

\def\supp{\text{\rm supp}}

\def\range{\rm ran}
\def\sgn{\rm sgn}

\def\lan{\langle}
\def\ran{\rangle}

\def\ra{\rightarrow}
\def\bs{\backslash}
\def\ol{\overline}

\def\al{\alpha}

\def\ep{\epsilon}

\def\la{\lambda}

\def\si{\sigma}

\def\om{\omega}
\def\Om{\Omega}
\def\de{\delta}
\def\De{\Delta}

\def\vp{\varphi}

\bibliographystyle{plain}

\begin{document}

\nocite{*}

\title{Scattering Theory of Schr\"{o}dinger Operators with Random Sparse Potentials}

\author{Zhongwei Shen\footnote{Email: zzs0004@auburn.edu}\\Department of Mathematics and Statistics\\Auburn University\\Auburn, AL 36849\\USA}

\date{}

\maketitle

\begin{abstract}
In this paper, we study the scattering theory of a class of continuum Schr\"{o}dinger operators with random sparse potentials. The existence and completeness of wave operators are proven by establishing the uniform boundedness of modified free resolvents and  modified perturbed resolvents, and by invoking a previous result on the absence of absolutely continuous spectrum below zero. \\
\textbf{Keywords.} Schr\"{o}dinger operator, random sparse potential, wave operator, completeness.\\
\textbf{2010 Mathematics Subject Classification.} Primary 47A40; Secondary 47A10, 81Q10.
\end{abstract}

%\tableofcontents

\section{Introduction}\label{sec-intro}

Absolutely continuous (a.c.) spectrum is one of the main topics of the theory of Schr\"{o}dinger operators with random potentials. While it is very hard to prove the existence of a.c. spectrum for Schr\"{o}dinger operators on $L^{2}(\R^{d})$ or $\ell^{2}(\Z^{d})$ with stationary random potentials (see e.g. \cite{Si00}), many Schr\"{o}dinger operators with non-stationary random potentials have been proven to exhibit a.c. spectrum (see e.g. \cite{DSS85,FHS10,KKO00,KS01}). Moreover, since the work of Krishna \cite{Kri90}, wave operators have been established for decaying random potentials (see e.g. \cite{Bou02,Bou03,Kri90,Kri12,RS03}) and random sparse potentials (see e.g. \cite{FS05,HK00,JP09,Ki02,Kru04,Po08}).

In the present paper, we study the existence and completeness of wave operators of a class of continuum Schr\"{o}dinger operators with random sparse potentials, that is,
\begin{equation}\label{main-model}
H_{\om}=H_{0}+V_{\om}\quad\text{on}\quad L^{2}(\R^{d}),
\end{equation}
where $H_{0}=-\De$ is the negative Laplacian and $V_{\om}$ is the random sparse potential of the form $\sum_{i\in\Z^{d}}q_{i}(\om)\xi_{i}(\om)u(\cdot-i)$ satisfying the following assumptions
\begin{itemize}
\item[\rm(H1)] $\{\xi_{i}\}_{i\in\Z^{d}}$ are independent $\{0,1\}$ Bernoulli variables on some probability space $(\Om,\BB,\P,\E)$. Let $p_{i}=\P\{\om\in\Om|\xi_{i}(\om)=1\}=\E\{\xi_{i}\}$ and suppose that
\begin{equation*}
p_{i}\sim(1+|i|)^{-\al},\quad\frac{d+1}{2}<\al\leq d.
\end{equation*}
Here and in the sequel, $p_{i}\sim(1+|i|)^{-\al}$ means there are universal constants $c,C>0$ such that $\frac{c}{(1+|i|)^{\al}}\leq p_{i}\leq\frac{C}{(1+|i|)^{\al}}$ for all $i\in\Z^{d}$.
\item[\rm(H2)] $\{q_{i}\}_{i\in\Z^{d}}$ are i.i.d nonnegative random variables on $(\Om,\BB,\P,\E)$ with finite mean, i.e., $\E\{q_{i}\}<\infty$, and are independent from $\{\xi_{i}\}_{i\in\Z^{d}}$. Also, we assume that for a.e. $\om\in\Om$, $\sup_{i\in\Z^{d}}q_{i}(\om)<\infty$.
\item[\rm(H3)] $u:\R^{d}\ra\R$ is a non-trivial, bounded and nonpositive function with  support contained in $C_{0}:=(-\frac{1}{2},\frac{1}{2})^{d}$.
\end{itemize}

Spectral and dynamical aspects of the model \eqref{main-model} have been studied. In \cite{HK00}, Hundertmark and Kirsch proved the coexistence of wave operators and essential spectrum below zero if $p_{i}\sim(1+|i|)^{-\al}$ with $2<\al\leq d$ (while $\al>2$ ensures the existence of wave operator via Cook's method, $\al\leq d$ guarantees the existence of essential spectrum below zero). Clearly, this can be arranged for any dimension $d\geq3$ but not for $d=1$ or $d=2$. Under additional assumptions, say $p_{i}\sim(1+|i|)^{-\al}$ with $\al>\frac{d}{2}$, a Klaus-type theorem for the essential spectrum was also established. Later, Boutet de Monvel, Stollmann and Stolz proved in \cite{BSS05} the absence of a.c. spectrum below zero if $p_{i}\sim(1+|i|)^{-\al}$ with $\al>d-1$. This result, recalled in Proposition \ref{prop-ac-absence}, will enable us to prove the completeness of wave operators.

We point out that for Schr\"{o}dinger operators with random sparse potentials, completeness of wave operators are only known in the discrete case so far (see \cite{JP09,Po08}). Their proofs are based on the Jak\v{s}i\'{c}-Last criterion of completeness (see \cite{JL03}), which is true only for discrete models.

Consider the Schr\"{o}dinger operator $H_{\om}$ in \eqref{main-model} with assumptions $\rm(H1)$, $\rm(H2)$ and $\rm(H3)$. To apply the smooth method of Kato (see e.g. \cite{Ka65,RS78,Ya92}), by writing
$H_{\om}-H_{0}=-\sqrt{-V_{\om}}\sqrt{-V_{\om}}$, we need to show the local $H_{0}$-smoothness and local $H_{\om}$-smoothness of $\sqrt{-V_{\om}}$. To do so, we define for $\la>0$ and $\ep>0$ the modified free resolvent
\begin{equation*}
F_{\om}(\la+i\ep)=-\sqrt{-V_{\om}}(H_{0}-\la-i\ep)^{-1}\sqrt{-V_{\om}}
\end{equation*}
and the modified perturbed resolvent
\begin{equation*}
P_{\om}(\la+i\ep)=-\sqrt{-V_{\om}}(H_{\om}-\la-i\ep)^{-1}\sqrt{-V_{\om}}.
\end{equation*}
According to \cite[Theorem XIII.30]{RS78}, the uniform boundedness of $F_{\om}$ and $P_{\om}$ lead to the local smoothness. Considering this, we now state the first main result of the paper.

\begin{thm}\label{thm-main}
Let $d\geq2$. Suppose $\rm(H1)$, $\rm(H2)$ and $\rm(H3)$. Then, for a.e. $\om\in\Om$, the following statements hold:
\begin{itemize}
\item[\rm(i)] for any $0<a<b<\infty$, we have
\begin{equation*}
\begin{split}
&\sup_{\la\in[a,b],\ep\in(0,1]}\|F_{\om}(\la+i\ep)\|_{\LLL(L^{2}(\R^{d}))}<\infty\quad\text{and}\\
&\sup_{\la\in[a,b],\ep\in(0,1]}\|P_{\om}(\la+i\ep)\|_{\LLL(L^{2}(\R^{d}))}<\infty,
\end{split}
\end{equation*}
where $\LLL(L^{2}(\R^{d}))$ is the space of bounded linear operators on $L^{2}(\R^{d})$;

\item[\rm(ii)] the wave operators $\Om_{\pm}(H_{\om},H_{0})=s\mbox{-}\lim_{t\ra\pm\infty}e^{iH_{\om}t}e^{-iH_{0}t}$
exist and are complete. In particular, $\si_{ac}(H_{\om})=[0,\infty)$.
\end{itemize}
\end{thm}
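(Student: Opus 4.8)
The plan is to run Kato's smooth method, so that (ii) becomes a formal consequence of (i) together with Proposition~\ref{prop-ac-absence}. Granting (i): since $H_\om-H_0=\sqrt{-V_\om}\,(-I)\,\sqrt{-V_\om}$ with $-I$ bounded, the uniform bound on $F_\om$ over $[a,b]\times(0,1]$ shows, by \cite[Theorem XIII.30]{RS78}, that $\sqrt{-V_\om}$ is $H_0$-smooth on $(a,b)$, and the uniform bound on $P_\om$ shows it is $H_\om$-smooth on $(a,b)$; the local Kato--Birman theorem then gives that the local wave operators $\Om_\pm(H_\om,H_0)E_{H_0}((a,b))$ exist and are complete for every $0<a<b<\infty$. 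Letting $(a,b)\uparrow(0,\infty)$, and using that $H_0$ is purely absolutely continuous with $\si(H_0)=[0,\infty)$ and $\|e^{iH_\om t}e^{-iH_0t}\|=1$, the wave operators $\Om_\pm(H_\om,H_0)$ exist and are complete on $(0,\infty)$; since $H_\om$ has no absolutely continuous spectrum in $(-\infty,0)$ by Proposition~\ref{prop-ac-absence} and $\{0\}$ carries none either, completeness extends to all of $\R$, and hence $\si_{ac}(H_\om)=\si_{ac}(H_0)=[0,\infty)$. Thus everything reduces to the two uniform resolvent bounds in (i).

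For $F_\om$, write $S(\om)=\{i\in\Z^d:\xi_i(\om)=1\}$; the single-site supports being disjoint, $\sqrt{-V_\om}=\sum_{i\in S(\om)}w_i$ with $w_i=\sqrt{q_i}\,\sqrt{-u(\cdot-i)}$ supported in $C_0+i$ and $\|w_i\|_\infty\le\sqrt{\|u\|_\infty\,q_i}$, so $F_\om(\la+i\ep)=-\sum_{i,j\in S(\om)}w_iR_0(\la+i\ep)w_j$ is a block operator. The explicit free resolvent kernel in $d\ge2$ (Hankel-function asymptotics) is, uniformly for $\la\in[a,b]$ and $\ep\in[0,1]$, bounded by $C(1+|x-y|)^{-(d-1)/2}$ off the diagonal and is near the diagonal the kernel of a bounded operator; hence $\|w_iR_0(\la+i\ep)w_j\|_{\LLL(L^2(\R^d))}\le C\sqrt{q_iq_j}\,(1+|i-j|)^{-(d-1)/2}$ uniformly in $\la,\ep$. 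A Schur test on the block operator then gives
\[
\sup_{\la\in[a,b],\,\ep\in(0,1]}\|F_\om(\la+i\ep)\|_{\LLL(L^2(\R^d))}\ \le\ C\Big(\sup_{i\in\Z^d}q_i(\om)\Big)\ \sup_{i\in S(\om)}\ \sum_{j\in S(\om)}(1+|i-j|)^{-(d-1)/2},
\]
and by (H2) it remains to show the last supremum is a.s.\ finite; in fact I will show it tends to $0$ as $|i|\to\infty$ along $S(\om)$, which also yields compactness later.

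This is the crux, and it is here that $\al>\tfrac{d+1}{2}$ enters. Put $Z_i=\sum_{j\ne i}\xi_j(1+|i-j|)^{-(d-1)/2}$. From $p_i\sim(1+|i|)^{-\al}$ one has the convolution estimate $\E Z_i=\sum_{j\ne i}p_j(1+|i-j|)^{-(d-1)/2}\le C(1+|i|)^{-\ga}$ with $\ga=\al-\tfrac{d+1}{2}>0$ (with at most a logarithmic loss at $\al=d$); the crude bound $\P\{Z_i\ge\de\}\le\de^{-1}\E Z_i$ only gives summability for $\al>\tfrac{3d+1}{4}$, so a sharper tail is needed. Decomposing $Z_i$ dyadically in $|i-j|$ and using binomial (Poissonian) large-deviation estimates, one obtains for $|i|$ large and each fixed $\de>0$ the bound $\P\{Z_i\ge\de\}\le C_\de\,(1+|i|)^{-\al}$, the point being that pushing $Z_i$ up to an order-one value essentially forces an occupied site within bounded distance of $i$, an event of probability $\asymp(1+|i|)^{-\al}$, far below the Chebyshev estimate $(1+|i|)^{-\ga}$ because $\al>\ga$. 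Since $Z_i$ is independent of $\xi_i$,
\[
\sum_{i\in\Z^d}\P\{\xi_i=1,\ Z_i\ge\de\}=\sum_i p_i\,\P\{Z_i\ge\de\}\ \le\ C_\de\sum_i(1+|i|)^{-2\al}\ <\ \infty
\]
(using $2\al>d$, which follows from (H1)); by Borel--Cantelli, for a.e.\ $\om$ only finitely many $i\in S(\om)$ satisfy $Z_i\ge\de$, and since each $Z_i$ is a.s.\ finite this proves $Z_i\to0$ along $S(\om)$, in particular $\sup_{i\in S(\om)}Z_i<\infty$.

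For $P_\om$ one uses the second resolvent identity in the form $P_\om(z)=(I+F_\om(z))^{-1}F_\om(z)$, valid for $\ep>0$; with $F_\om$ already controlled it remains to bound $(I+F_\om(\la+i\ep))^{-1}$ uniformly, which forces one to reach $\ep=0$. The kernel estimates and dominated convergence give $F_\om(\la+i\ep)\to F_\om(\la+i0)$ in operator norm, jointly continuously on $[a,b]\times[0,1]$, and the decay $Z_i\to0$ along $S(\om)$ lets one approximate $F_\om(\la+i0)$ in norm by the compact operators $-\sum_{|i|,|j|\le N}w_iR_0(\la+i0)w_j$, so $F_\om(\la+i0)$ is compact. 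By the Fredholm alternative $I+F_\om(\la+i0)$ is boundedly invertible as soon as its kernel is trivial; and if $(I+F_\om(\la+i0))g=0$ then $h:=R_0(\la+i0)(\sqrt{-V_\om}\,g)$ is an outgoing solution of $H_\om h=\la h$ with $\sqrt{-V_\om}\,h=g\in L^2(\R^d)$, which must vanish for $\la>0$ by the absence of positive eigenvalues and resonances of $H_\om$ (a Rellich/Kato-type unique continuation, available since $V_\om$ is bounded and sparse), so $g=0$. Joint continuity and compactness of $[a,b]$ then yield $\sup_{\la\in[a,b],\ep\in[0,1]}\|(I+F_\om(\la+i\ep))^{-1}\|<\infty$, completing (i). The principal obstacle is the probabilistic estimate of the third paragraph: the sharp range $\al>\tfrac{d+1}{2}$ genuinely needs the rare-event tail bound rather than a moment inequality; the limiting-absorption input for $P_\om$---compactness of $F_\om(\la+i0)$ and triviality of its Birman--Schwinger kernel---is the other, more routine, ingredient.
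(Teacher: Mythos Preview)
Your overall architecture---reduce (ii) to (i) via Kato's smooth method and then close completeness with Proposition~\ref{prop-ac-absence}---is exactly the paper's. The differences, and the gap, lie in how you prove (i).

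\medskip
\textbf{The bound on $F_\om$.} You control the block operator by a Schur test, which forces you to show
\[
\sup_{i\in S(\om)}\ \sum_{j\in S(\om)}(1+|i-j|)^{-(d-1)/2}<\infty\quad\text{a.s.}
\]
and you acknowledge that Chebyshev only reaches $\al>\tfrac{3d+1}{4}$; the sharp range $\al>\tfrac{d+1}{2}$ then rests on the tail bound $\P\{Z_i\ge\de\}\lesssim(1+|i|)^{-\al}$, which you motivate but do not prove. The paper bypasses this entirely: it splits $F_\om$ into a diagonal part (cubes $C_i\times C_i$) and an off-diagonal part, bounds the diagonal part directly from the local singularity of the kernel, and for the off-diagonal part takes the \emph{expectation of the Hilbert--Schmidt norm} and reduces it, via the Hardy--Littlewood--Sobolev inequality with exponent $\tfrac{2d}{d+1}$, to $\sum_i p_i^{2d/(d+1)}<\infty$, which is exactly $\al>\tfrac{d+1}{2}$. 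No large-deviation input is needed. Your route may be completable, but the crucial estimate is only sketched, whereas the paper's is a two-line computation.

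\medskip
\textbf{The bound on $P_\om$.} Here your approach diverges more seriously. You argue: $F_\om(\la+i0)$ is compact, so by the Fredholm alternative $1+F_\om(\la+i0)$ is invertible once its kernel is trivial, and you rule out a kernel element by invoking ``absence of positive eigenvalues and resonances of $H_\om$ (a Rellich/Kato-type unique continuation, available since $V_\om$ is bounded and sparse).'' This is the genuine gap. The potential $V_\om$ does \emph{not} decay pointwise: on each occupied cube it has size $\sim q_i\|u\|_\infty$, so neither Kato's theorem on absence of positive eigenvalues (which needs, e.g., $|x|V(x)\to 0$) nor the standard Agmon bootstrap for ruling out outgoing solutions applies directly. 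Sparseness alone does not give a Rellich-type uniqueness statement outside a compact set, because $V_\om$ is not compactly supported. You would need a separate argument tailored to random sparse potentials, and none is provided.

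The paper avoids this input altogether. It never uses compactness of $F_\om(\la+i0)$ or absence of embedded eigenvalues/resonances. Instead it argues softly: $1+F_\om(z)$ is invertible for $\Im z>0$ by a direct computation with the resolvent (Lemma~\ref{invertible-away-from-real}); continuity of $z\mapsto F_\om(z)$ on $\ol{\SS}$ (from the kernel estimates and dominated convergence) then lets one pass to $\ep=0$ by stability of bounded invertibility under small perturbations. This is the step you should replace your Fredholm/resonance argument with.
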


As opposed to the results obtained by Hundertmark and Kirsch in \cite{HK00}, completeness of wave operators is established here at a price of making the decay rate $\al$ larger (with the same value in dimension $d=3$). But, in the present of essential spectrum below zero (i.e., $\al\leq d$), our results also hold in dimension $d=2$, which are previously unknown even for the existence of wave operators.

With the decay rate $\al$ satisfying $\frac{d+1}{2}<\al\leq d$, the Klaus-type theorem established in \cite{HK00} is as follows: for a.e. $\om\in\Om$, the essential spectrum below zero of $H_{\om}$ is given by $\ol{\cup_{\la\in\supp(\P_{0})}E(\la)}$, where $\P_{0}$ is the common distribution of $\{q_{i}\}_{i\in\Z^{d}}$ and $E(\la)$ is the set of all eigenvalues of $H_{0}+\la u$. Obviously, the set $\ol{\cup_{\la\in\supp(\P_{0})}E(\la)}$ could be very complicated, but this does not prevent us from proving the completeness of wave operators due to the result of Boutet de Monvel, Stollmann and Stolz (see \cite{BSS05}) as mentioned before. This situation is quite different from that of deterministic sparse potentials of the form $\sum_{i=1}^{\infty}u_{n}(\cdot-x_{n})$ as studied in \cite{Sh14}. For deterministic sparse potentials, except for the Klaus-type theorem, no more precise description of the essential spectrum below zero is available so far in general, and hence, a.c. spectrum below zero can not be excluded, which disables the proof of completeness of wave operators.

Finally, we genrealize Theorem \ref{thm-main}.  Consider the model
\begin{equation*}
\tilde{H}_{\om}=H_{0}+\tilde{V}_{\om}=H_{0}+\sum_{i\in\Z^{d}}\om_{i}u_{i}
\end{equation*}
under the assumptions
\begin{itemize}
\item[\rm(H4)] $\{\om_{i}\}_{i\in\Z^{d}}$ are independent random variables on some probability space $(\Om,\BB,\P,\E)$. Suppose for a.e. $\om\in\Om$, $\sup_{i\in\Z^{d}}|\om_{i}|<\infty$ and
\begin{equation*}
p_{i}=\E\{|\om_{i}|\}\sim(1+|i|)^{-\al},\quad\al>\frac{d+1}{2}.
\end{equation*}
\item[\rm(H5)] $\{u_{i}\}_{i\in\R^{d}}$ are real-valued functions satisfying $\sup_{i\in\Z^{d}}\|u_{i}\|_{\infty}<\infty$ and $\supp(u_{i})\subset i+B$ for all $i\in\Z^{d}$, where $B\subset\R^{d}$ is an open ball containing $0$.
\item[\rm(H6)] There are $m<0<M$ such that $\P\{\om\in\Om|\om_{i}\in[m,M]\}=1$ for all $i\in\Z^{d}$ and for all $\ep>0$,
\begin{equation*}
\P\{\om\in\Om||\om_{i}|\geq\ep\}\sim(1+|i|)^{-\al},\quad\al>d-1.
\end{equation*}
\end{itemize}
Then,
\begin{thm}\label{thm-main-4}
Let $d\geq2$. Suppose $\rm(H4)$ and $\rm(H5)$. Then,
\begin{itemize}
\item[\rm(i)] for a.e. $\om\in\Om$, there exist the strong limits
\begin{equation*}
\begin{split}
\Om_{\pm}(\tilde{H}_{\om},H_{0};I)&=s\mbox{-}\lim_{t\ra\pm\infty}e^{i\tilde{H}_{\om}t}e^{-iH_{0}t}\chi_{I}(H_{0}),\\
\Om_{\pm}(H_{0},\tilde{H}_{\om};I)&=s\mbox{-}\lim_{t\ra\pm\infty}e^{iH_{0}t}e^{-i\tilde{H}_{\om}t}\chi_{I}(\tilde{H}_{0})
\end{split}
\end{equation*}
for any interval $I\subset(0,\infty)$. In particular, for a.e. $\om\in\Om$, there exist the wave operators $\Om_{\pm}(\tilde{H}_{\om},H_{0})$ and the strong limits
\begin{equation*}
s\mbox{-}\lim_{t\ra\pm\infty}e^{iH_{0}t}e^{-i\tilde{H}_{\om}t}P_{ac}(\tilde{H}_{\om})\chi_{[0,\infty)}(\tilde{H}_{\om}),
\end{equation*}
where $P_{ac}(\tilde{H}_{\om})$ is the projection onto the a.c. subspace of $\tilde{H}_{\om}$.

\item[\rm(ii)] assume, in addition, $\rm(H6)$, then the wave operators $\Om_{\pm}(\tilde{H}_{\om},H_{0})$ exist and are complete.
\end{itemize}
\end{thm}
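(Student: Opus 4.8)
The plan is to reduce Theorem \ref{thm-main-4} to Theorem \ref{thm-main} by checking that the more general hypotheses $\rm(H4)$--$\rm(H6)$ still yield the two analytical inputs used before: uniform boundedness of the modified resolvents $F_\om, P_\om$ on compact subsets of $(0,\infty)$, and the absence of a.c. spectrum below zero. For part (i), I would first write $\tilde V_\om = -\sqrt{|\tilde V_\om|}\,\sgn(\tilde V_\om)\,\sqrt{|\tilde V_\om|}$ and reorganize the sum over $i$ according to sign, so that $\tilde V_\om$ is a bounded perturbation that is a (signed) sparse potential with the same probabilistic decay $p_i \sim (1+|i|)^{-\al}$, $\al > \tfrac{d+1}{2}$. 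The compact support condition $\supp(u_i)\subset i+B$ with a fixed ball $B$ plays exactly the role that $\supp(u)\subset C_0$ played before (after possibly a harmless dilation $B\subset (-R,R)^d$, which only changes constants), and $\sup_i\|u_i\|_\infty<\infty$ together with $\sup_i|\om_i|<\infty$ a.s. replaces $\rm(H2)$ and $\rm(H3)$. So the same estimates on $F_\om(\la+i\ep)$ and $P_\om(\la+i\ep)$ used to prove Theorem \ref{thm-main}(i) go through verbatim; the sign factor $\sgn(\tilde V_\om)$ is a bounded multiplication operator and does not affect operator-norm bounds. Then Kato's smooth perturbation theory (\cite[Theorem XIII.30]{RS78} plus \cite[Theorem XIII.31 or XIII.33]{RS78}) gives local $H_0$- and local $\tilde H_\om$-smoothness of $\sqrt{|\tilde V_\om|}$ on any $I\Subset(0,\infty)$, hence the existence of the four strong limits $\Om_\pm(\tilde H_\om, H_0; I)$ and $\Om_\pm(H_0,\tilde H_\om; I)$; letting $I\uparrow(0,\infty)$ yields the wave operators $\Om_\pm(\tilde H_\om, H_0)$ and, via the adjoint/intertwining identity, the stated strong limits onto $\range P_{ac}(\tilde H_\om)\chi_{[0,\infty)}(\tilde H_\om)$.

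For part (ii), completeness is equivalent to $\range\,\Om_\pm(\tilde H_\om,H_0) = \range P_{ac}(\tilde H_\om)$, i.e.\ to the a.c.\ part of $\tilde H_\om$ being supported in $[0,\infty)$ together with the above existence of both sets of limits. The existence of $\Om_\pm(H_0,\tilde H_\om;(0,\infty))$ from part (i) already shows that the a.c.\ spectrum in $(0,\infty)$ is unitarily equivalent to that of $H_0$ and is exhausted by the range of $\Om_\pm(\tilde H_\om,H_0)$; what remains is to rule out a.c.\ spectrum of $\tilde H_\om$ in $(-\infty,0]$. This is precisely where $\rm(H6)$ enters: the two-sided bound $m\le\om_i\le M$ and the tail estimate $\P\{|\om_i|\ge\ep\}\sim(1+|i|)^{-\al}$ with $\al>d-1$ put $\tilde H_\om$ into the framework of Boutet de Monvel--Stollmann--Stolz recalled in Proposition \ref{prop-ac-absence}, giving $\si_{ac}(\tilde H_\om)\cap(-\infty,0)=\emptyset$ for a.e.\ $\om$ (the point $\{0\}$ has zero Lebesgue measure and is irrelevant for a.c.\ spectrum). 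Combining, $P_{ac}(\tilde H_\om) = P_{ac}(\tilde H_\om)\chi_{(0,\infty)}(\tilde H_\om)$, which together with part (i) gives $\range\,\Om_\pm(\tilde H_\om,H_0) = \range\,P_{ac}(\tilde H_\om)$, i.e.\ completeness.

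The main obstacle is verifying that the resolvent estimates behind Theorem \ref{thm-main}(i) really are insensitive to replacing the single profile $u\le 0$ supported in $C_0$ by a family $\{u_i\}$ of signed profiles supported in $i+B$ with only $\sup_i\|u_i\|_\infty<\infty$. Concretely, in the proof of Theorem \ref{thm-main} one expands $\sqrt{|\tilde V_\om|}(H_0-\la-i\ep)^{-1}\sqrt{|\tilde V_\om|}$ into a matrix of blocks indexed by the ``active'' sites $\{i:\om_i\ne 0\}$, and controls it using (a) pointwise decay of the free resolvent kernel between cubes $i+B$ and $j+B$, which depends only on $|i-j|$ and not on the profiles, and (b) a Borel--Cantelli / large-deviation argument showing that with probability one the active sites are sparse enough (here $\al>\tfrac{d+1}{2}$ is exactly the threshold that makes the relevant Schur-type sum converge). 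Both ingredients only use $\supp(u_i)\subset i+B$ and the uniform $L^\infty$ bound, so the argument is robust; the signs enter only through a bounded factor. I would therefore organize part (ii)'s proof of (i) as: (1) reduce to the active-site block operator, (2) quote the free-resolvent kernel bound on unit-scale cubes, (3) rerun the probabilistic sparsity estimate with the new $p_i$, (4) conclude uniform boundedness of $F_\om$ and then, by a geometric-series / resolvent-identity argument valid once $\|F_\om\|$ is controlled away from $1$ (or via the standard Kato argument that $I+F_\om$ is boundedly invertible on the a.c.\ spectrum), uniform boundedness of $P_\om$. The only genuinely new checking is bookkeeping the ball $B$ in place of $C_0$, which costs at most a fixed finite multiplicity in the block decomposition and hence only affects constants.
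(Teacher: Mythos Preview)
Your high-level plan matches the paper's: establish $\sup_{z\in\ol\SS}\|\tilde F_\om(z)\|<\infty$ and continuity on $\ol\SS$, deduce the same for $\tilde P_\om$ via the resolvent identity, apply Kato's smooth method to get the local wave operators, and then invoke Proposition~\ref{prop-ac-absence} under $\rm(H6)$ for completeness. Two of your specifics, however, depart from the paper and one of them is a genuine weak spot.

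First, the estimates for $\tilde F_\om$ do \emph{not} go through verbatim. Because $\supp(u_i)\subset i+B$ with a fixed ball, the supports overlap, so the clean diagonal/off-diagonal decomposition of Theorem~\ref{thm-free-res} is unavailable. The paper uses $|\tilde V_\om|^{1/2}\le\sum_i\sqrt{|\om_i|}\sqrt{|u_i|}$ and splits the double sum into a near part $|i-j|_\infty\le R-1$ and a far part $|i-j|_\infty\ge R$; the near part needs the auxiliary Lemmas~\ref{lem-aux-1}, \ref{lem-aux-2} and \ref{lem-decaying} to handle the finite multiplicity, not merely a change of constants. For the far part the paper does not run a Borel--Cantelli or Schur argument; it repeats the Hilbert--Schmidt/expectation bound of \eqref{estimate-2} and applies Hardy--Littlewood--Sobolev with exponent $\tfrac{2d}{d+1}$, which is exactly where $\al>\tfrac{d+1}{2}$ enters.

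Second, your step (4) is the actual gap. Uniform boundedness of $\tilde F_\om$ gives no control forcing $\|\tilde F_\om(z)\|<1$, so a geometric-series inversion of $1+\tilde F_\om(z)$ is simply not available. The paper (as in the proof of Theorem~\ref{thm-perturb-res}) instead proves directly that $1+\tilde F_\om(z)$ is boundedly invertible for every $z\in\ol\SS$, via Lemmas~\ref{invertible-away-from-real}--\ref{uniform-bounded-inverse}: injectivity from the imaginary-part identity $\Im R_0(z)=\Im z\,R_0(\bar z)R_0(z)$, a contradiction argument against $z\in\si(H_\om)$, then continuity in $z$ to pass to the boundary and to get the uniform bound. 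Only after this does the identity $\tilde P_\om(1+\tilde F_\om)=\tilde F_\om$ yield $\sup_{z\in\SS}\|\tilde P_\om(z)\|<\infty$. Your parenthetical ``standard Kato argument'' points in the right direction, but it is the substance of the step, not an optional alternative.
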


The rest of the paper is organized as follows. Section \ref{sec-uniform-boundedness} is the main part of the paper.  In Subsection \ref{sec-free}, we prove the uniform boundedness of modified free resolvents. In Subsection \ref{sec-perturbed}, we show the the uniform boundedness of modified perturbed resolvents. In Subsection \ref{sec-proof-main-results}, we prove Theorem \ref{thm-main}. In Subsection \ref{sec-proof-of-main-thm-4}, we prove Theorem \ref{thm-main-4}.

Throughout the paper, we use the following notations: for two nonnegative numbers $a$ and $b$, by writing $a\lesssim b$ we mean there's some $C>0$ such that $a\leq Cb$; $\lesssim_{c_{1},c_{2},\dots}$ is used to show the dependence on $c_{1},c_{2},\dots$; $a\sim b$ means $a\lesssim b$ and $b\lesssim a$; $\sim_{c_{1},c_{2},\dots}$ is used to show the dependence on $c_{1},c_{2},\dots$; $\chi_{B}$ is the characteristic function of $B\subset\R^{d}$; the norm and the inner product on $L^{2}=L^{2}(\R^{d})$ are denoted by $\|\cdot\|$ and $\lan\cdot,\cdot\ran$; for two Banach spaces $X$ and $Y$, we denote by $\LLL(X,Y)$ the space of all bounded linear operators from $X$ to $Y$ and the norm on $\LLL(X,Y)$ is denoted by $\|\cdot\|=\|\cdot\|_{\LLL(X,Y)}$; if $X=Y$, we write $\LLL(X)=\LLL(X,Y)$.

%%%%%%%%%%%%%%%%%%%%%%%%%%%%%%%%%%%%%%%%%%%%%%%%

\section{Uniform boundedness of modified resolvents}\label{sec-uniform-boundedness}

For $z\in\C\bs\R$, we set $R_{0}(z)=(H_{0}-z)^{-1}$ and $R_{_{\om}}(z)=(H_{\om}-z)^{-1}$, the free resolvent and the perturbed resolvent. It is well-known (see e.g. \cite[page 288]{GS77}) that $R_{0}(z)$ is an integral operator with an explicit integral kernel
\begin{equation}\label{int-kernel-free}
k_{z}(x,y)=k_{z}(|x-y|)=c_{d}\bigg(\frac{\sqrt{z}}{|x-y|}\bigg)^{(d-2)/2}K_{(d-2)/2}(-i\sqrt{z}|x-y|),
\end{equation}
where $c_{d}$ is a constant depends only on $d$,  $\sqrt{z}$ satisfies $\Im\sqrt{z}>0$ and $K_{\bullet}$ is the Bessel potential.

We modify both the free resolvent $R_{0}(z)$ and the perturbed resolvent $R_{\om}(z)$ by defining the following operators: for $z\in\C\bs\R$,
\begin{equation*}
\begin{split}
F_{\om}(z)&=-\sqrt{-V_{\om}}R_{0}(z)\sqrt{-V_{\om}},\\
P_{\om}(z)&=-\sqrt{-V_{\om}}R_{\om}(z)\sqrt{-V_{\om}}.
\end{split}
\end{equation*}
By the resolvent identity $R_{\om}(z)-R_{0}(z)=-R_{\om}(z)V_{\om}R_{0}(z)$, we check
%\begin{equation*}
%-\sqrt{-V_{\om}}R_{\om}(z)\sqrt{-V_{\om}}+\sqrt{-V_{\om}}R_{0}(z)\sqrt{-V_{\om}}=-\sqrt{-V_{\om}}R_{\om}(z)\sqrt{-V_{\om}}\sqrt{-V_{\om}}R_{0}(z)\sqrt{-V_{\om}},
%\end{equation*}
%that is,
\begin{equation*}
P_{\om}(z)(1+F_{\om}(z))=F_{\om}(z).
\end{equation*}
Therefore, Theorem \ref{thm-main}$\rm(i)$ may be established if enough information about $F_{\om}(z)$ can be acquired.

In the sequel, we fix $0<a<b<\infty$ and set
\begin{equation*}
\begin{split}
\SS&=\Big\{z=\la+i\ep\in\C_{+}\Big|\la\in[a,b]\,\,\text{and}\,\,\ep\in(0,1]\Big\},\\
\ol{\SS}&=\Big\{z=\la+i\ep\in\C_{+}\Big|\la\in[a,b]\,\,\text{and}\,\,\ep\in[0,1]\Big\}.
\end{split}
\end{equation*}
In the following two subsections, we prove for $z\in\SS$ the uniform boundedness of $F_{\om}(z)$ and of $(1+F_{\om}(z))^{-1}$, which then allows us to establish the uniform boundedness of $P_{\om}(z)$.

In the rest of this section, $\rm(H1)$, $\rm(H2)$ and $\rm(H3)$ are assumed.

\subsection{Uniform boundedness of $F_{\om}(z)$}\label{sec-free}

Note $F_{\om}(z)$ is an integral operator with integral kernel
\begin{equation}\label{integral-kernel}
k_{F_{\om}(z)}(x,y)=-\sqrt{-V_{\om}(x)}k_{z}(x,y)\sqrt{-V_{\om}(y)},
\end{equation}
where $k_{z}(x,y)$ is the integral kernel of $R_{0}(z)$ given in \eqref{int-kernel-free}. For $\la\in[a,b]$ we set
\begin{equation*}
k_{\la+i0}(x,y)=\lim_{z\in\SS,z\ra\la}k_{z}(x,y).
\end{equation*}
Then, we define $F_{\om}(\la+i0)$ to be the integral operator with integral kernel
\begin{equation}\label{integral-kernel-1}
k_{F_{\om}(\la+i0)}(x,y)=-\sqrt{-V_{\om}(x)}k_{\la+i0}(x,y)\sqrt{-V_{\om}(y)}.
\end{equation}

We recall the following property of $k_{z}(x,y)$ (see e.g. \cite[page 418]{IS06}), which comes from the standard estimates of Bessel potentials.
\begin{lem}\label{estimate-of-kernel}
There's a constant $C=C(a,b,\de)$ such that for any $z\in\ol{\SS}$
\begin{equation*}
\begin{split}
|k_{z}(x,y)|=|k_{z}(|x-y|)|\leq \left\{ \begin{aligned}
C\ln\big(2|x-y|^{-1}\big),&\quad\text{if}\,\,|x-y|\leq\de\,\,\text{and}\,\,d=2,\\
C|x-y|^{-(d-2)},&\quad\text{if}\,\,|x-y|\leq\de\,\,\text{and}\,\,d\geq3,\\
C|x-y|^{-(d-1)/2},&\quad\text{if}\,\,|x-y|>\de\,\,\text{and}\,\,d\geq2.
\end{aligned} \right.
\end{split}
\end{equation*}
\end{lem}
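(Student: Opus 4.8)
\emph{Proof proposal.} The plan is to reduce the claim to the classical small- and large-argument bounds for the modified Bessel (Macdonald) function $K_{\nu}$ and then extract uniformity from the compactness of $\ol{\SS}$. Write $r=|x-y|$, $\nu=(d-2)/2$, and $\zeta=\zeta(z)=-i\sqrt{z}$, so that $k_{z}(r)=c_{d}(\sqrt{z})^{\nu}r^{-\nu}K_{\nu}(\zeta r)$. As $z=\la+i\ep$ ranges over $\ol{\SS}$ the principal root $\sqrt{z}$ stays in a fixed compact subset of $\{\Re w>0,\ \Im w\geq0\}$ with $|\sqrt{z}|=|z|^{1/2}$ bounded above and below by positive constants depending only on $a,b$; hence $\zeta$ ranges over a fixed compact subset of the closed fourth quadrant, $|\zeta|=|\sqrt{z}|\sim_{a,b}1$, and $\arg(\zeta r)=\arg\zeta\in[-\pi/2,0]$ for every $r>0$. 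There is also the convenient identity $\sqrt{z}/\zeta=i$, so the scalar prefactor produced below always has modulus one. In particular $\zeta r$ lies in the sector $|\arg w|\leq\pi/2$, strictly inside the cut plane on which $K_{\nu}$ is analytic and inside the sector $|\arg w|<3\pi/2$ in which its asymptotic expansion holds.

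For $r\leq\de$ (we assume $\de\in(0,1)$, as the estimate is of interest only then) I would invoke the behaviour of $K_{\nu}$ near the origin. For $d=2$ one has $\nu=0$ and $K_{0}(w)=-\ln(w/2)-\ga_{E}+O(|w|^{2}\ln|w|)$ as $w\to0$, where $\ga_{E}$ is the Euler constant; since $|\arg(\zeta r)|\leq\pi/2$ and $|\zeta|\sim1$, this gives $|k_{z}(r)|=|c_{2}|\,|K_{0}(\zeta r)|\lesssim_{a,b}1+\ln(1/r)\lesssim_{a,b,\de}\ln(2r^{-1})$, using $\ln(2r^{-1})\geq\ln(2\de^{-1})>0$ on $0<r\leq\de<2$. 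For $d\geq3$ one has $\nu>0$ and $K_{\nu}(w)=\tfrac12\Ga(\nu)(w/2)^{-\nu}(1+O(|w|^{2}))$ (with an additional $O(|w|^{2\nu}\ln|w|)$ when $\nu\in\N$, still lower order), so $(\sqrt{z})^{\nu}r^{-\nu}K_{\nu}(\zeta r)$ equals, up to a bounded factor, a constant multiple of $(\sqrt{z}/\zeta)^{\nu}r^{-2\nu}=i^{\nu}r^{-(d-2)}$, whence $|k_{z}(r)|\lesssim_{a,b,d}r^{-(d-2)}$. For $r>\de$ I would use the standard asymptotics $K_{\nu}(w)=\sqrt{\pi/(2w)}\,e^{-w}(1+O(|w|^{-1}))$, valid uniformly for $\arg w$ in the compact subsector $[-\pi/2,0]$; since $\Re(\zeta r)=r\,\Im\sqrt{z}\geq0$ we have $|e^{-\zeta r}|\leq1$, and because the closed region $\Si=\{w:|w|\geq\sqrt{a}\,\de,\ \arg w\in[-\pi/2,0]\}$ carries the continuous function $w\mapsto w^{1/2}e^{w}K_{\nu}(w)$, which tends to $\sqrt{\pi/2}$ at infinity and is therefore bounded on $\Si$, we get $|K_{\nu}(\zeta r)|\lesssim_{a,b,\de}|\zeta r|^{-1/2}$ and hence $|k_{z}(r)|\lesssim|\sqrt{z}|^{\nu}r^{-\nu}|\zeta r|^{-1/2}=|\sqrt{z}|^{(d-3)/2}r^{-(d-1)/2}\lesssim_{a,b,d}r^{-(d-1)/2}$.

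Uniformity over $z\in\ol{\SS}$ is then automatic: every $O(\cdot)$ remainder above is controlled by the sector and annulus into which $\zeta r$ is confined, and those depend only on $a,b$; for $r$ in any fixed compact subinterval $J\subset(0,\infty)$ the kernel is continuous on $\ol{\SS}\times J$ and hence bounded by a constant, which fits all three asserted bounds after enlarging $C$. The one point that genuinely requires care — and is where a reference such as \cite{IS06} is convenient — is the boundary case $\ep=0$: then $\sqrt{z}=\sqrt{\la}$ is real and $\zeta r=-i\sqrt{\la}\,r$ is purely imaginary, i.e.\ on the edge of the domain of the principal branch of $K_{\nu}$, so one must know that $K_{\nu}$ extends continuously (indeed analytically) to $\{|\arg w|\leq\pi/2\}$ — equivalently, rewrite $K_{\nu}(\zeta r)$ through the Hankel function $H^{(1)}_{\nu}(\sqrt{\la}\,r)$ — and that both the origin expansion and the large-argument asymptotics remain valid up to and on the imaginary axis. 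This continuity is also exactly what makes the boundary kernel $k_{\la+i0}$ in \eqref{integral-kernel-1} well defined and lets all estimates pass to $\ep=0$ with the same constants. I expect this analytic-continuation bookkeeping, rather than the size estimates themselves, to be the only real obstacle.
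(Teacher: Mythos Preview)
The paper does not actually prove this lemma; it simply records it as a known fact, citing \cite[p.~418]{IS06} and remarking that it ``comes from the standard estimates of Bessel potentials.'' Your proposal is a correct unpacking of precisely those standard estimates: the small-argument expansions of $K_{\nu}$ (logarithmic for $\nu=0$, power-law for $\nu>0$) and the large-argument asymptotic $K_{\nu}(w)\sim\sqrt{\pi/(2w)}\,e^{-w}$, combined with the observation that $\zeta r$ stays in the fixed sector $\arg w\in[-\pi/2,0]$ with $|\zeta|\sim_{a,b}1$, so compactness of $\ol{\SS}$ delivers the uniformity in $z$. The computations are accurate.

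One small correction: your worry about the boundary case $\ep=0$ is slightly misplaced. When $\ep=0$ the point $\zeta r=-i\sqrt{\la}\,r$ lies on the negative imaginary axis, which is \emph{strictly inside} the cut plane $|\arg w|<\pi$ where $K_{\nu}$ is analytic --- it is not on the edge of the domain. So there is no issue of continuous extension for $K_{\nu}$ itself. What does need (and receives) a moment's care is that the small- and large-argument asymptotics hold \emph{uniformly} on the closed sector $\arg w\in[-\pi/2,0]$; this is standard and is, in effect, what the reference to \cite{IS06} is meant to cover.
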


We now prove the uniform boundedness of $F_{\om}(z)$ for $z\in\ol{\SS}$.

\begin{thm}\label{thm-free-res}
For a.e. $\om\in\Om$, $F_{\om}(z)$ is uniformly bounded on $\ol{\SS}$, that is,
\begin{equation*}
\sup_{z\in\ol{\SS}}\|F_{\om}(z)\|<\infty.
\end{equation*}
\end{thm}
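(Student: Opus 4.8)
The plan is to bound $\|F_\om(z)\|$ by the Schur test, using the kernel estimate of Lemma~\ref{estimate-of-kernel} together with the sparseness of $V_\om$. Since $F_\om(z)$ is the integral operator with kernel $-\sqrt{-V_\om(x)}\,k_z(x,y)\,\sqrt{-V_\om(y)}$, and $u$ is bounded with $\supp u\subset C_0$, the potential $-V_\om$ is supported on $\bigcup_{i\in\Z^d,\ \xi_i(\om)=1}(i+C_0)$, with $\sup_i|q_i(\om)|<\infty$ for a.e.\ $\om$ by (H2). Hence on the support, $\sqrt{-V_\om}$ is bounded by a (random but a.s.\ finite) constant. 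It therefore suffices to show that the Schur bound
\[
\sup_{x}\int_{\R^d}\sqrt{-V_\om(x)}\,|k_z(x,y)|\,\sqrt{-V_\om(y)}\,dy
\]
is uniformly bounded in $z\in\ol\SS$ for a.e.\ $\om$. Because both factors of the potential live in the sparse set, this reduces to a sum over the "active" sites: with $\chi_i$ the indicator of $i+C_0$, one needs
\[
\sup_{i:\ \xi_i(\om)=1}\ \sum_{j:\ \xi_j(\om)=1}\ \sup_{x\in i+C_0}\int_{j+C_0}|k_z(x,y)|\,dy<\infty.
\]

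The key point is to split this double sum by the separation $|i-j|$. For $|i-j|\le \sqrt d$ (the finitely many "nearby" boxes, including $i=j$) one uses the short-range part of Lemma~\ref{estimate-of-kernel}: the singularity $|x-y|^{-(d-2)}$ (or $\ln(2|x-y|^{-1})$ when $d=2$) is locally integrable in $\R^d$ for $d\ge 2$, so each such term contributes a uniformly bounded amount, and there are only $O(1)$ of them. For $|i-j|\ge 2\sqrt d$ one uses the long-range bound $|k_z(x,y)|\le C|x-y|^{-(d-1)/2}$, so that $\sup_{x\in i+C_0}\int_{j+C_0}|k_z(x,y)|\,dy\lesssim |i-j|^{-(d-1)/2}$. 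The remaining task is thus to show that, for a.e.\ $\om$,
\[
\sup_{i:\ \xi_i(\om)=1}\ \sum_{j:\ \xi_j(\om)=1,\ |i-j|\ge 2\sqrt d}|i-j|^{-(d-1)/2}<\infty.
\]

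This last bound is the main obstacle, and it is where the Bernoulli structure (H1) enters decisively — note $(d-1)/2<d$, so the \emph{deterministic} sum $\sum_j |i-j|^{-(d-1)/2}$ diverges, and one genuinely needs the thinning by $\xi_j$. The strategy is a Borel--Cantelli / first-moment argument: for fixed $i$, $\E\big[\xi_i\sum_{j\ne i}\xi_j|i-j|^{-(d-1)/2}\big]=p_i\sum_{j\ne i}p_j|i-j|^{-(d-1)/2}$, and since $p_j\sim(1+|j|)^{-\al}$ with $\al>\tfrac{d+1}{2}$, the tail sum $\sum_j(1+|j|)^{-\al}|i-j|^{-(d-1)/2}$ decays in $|i|$ (one checks $\al+\tfrac{d-1}{2}>d$, so the convolution is summable and in fact $\lesssim (1+|i|)^{-\be}$ for some $\be>0$ after using $p_i\lesssim(1+|i|)^{-\al}$). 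Summing over $i$ gives a finite total expectation, so by Markov's inequality and Borel--Cantelli the event that $\xi_i\sum_j\xi_j|i-j|^{-(d-1)/2}\ge 1$ occurs for only finitely many $i$, a.s.; combined with the a.s.\ finiteness of each individual (finite) partial sum, this yields the claimed uniform bound over active sites $i$. Feeding this back through the Schur test, together with the $O(1)$ near-diagonal contribution and the a.s.\ bound on $\sqrt{-V_\om}$, gives $\sup_{z\in\ol\SS}\|F_\om(z)\|<\infty$ for a.e.\ $\om$. I would also remark that the same argument applied to $\sup_y\int\cdots dx$ is identical by the symmetry $k_z(x,y)=k_z(|x-y|)$, so the Schur test applies with a single bound.
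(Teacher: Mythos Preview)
Your Schur-test reduction is correct up to the point where you need
\[
\sup_{i:\,\xi_i(\om)=1}\ \sum_{j:\,\xi_j(\om)=1,\,j\ne i}|i-j|^{-(d-1)/2}<\infty\quad\text{a.s.},
\]
but the first-moment/Borel--Cantelli argument you sketch does not close in the full range of $\al$ allowed by (H1). The quantity you must control is $\sum_i \P[\xi_i T_i\ge 1]$ with $T_i=\sum_{j\ne i}\xi_j|i-j|^{-(d-1)/2}$; by independence and Markov this is bounded by $\sum_i p_i\,\E[T_i]=\sum_{i\ne j}p_ip_j|i-j|^{-(d-1)/2}$. A direct convolution estimate gives $\sum_{j\ne i}p_j|i-j|^{-(d-1)/2}\sim(1+|i|)^{(d+1)/2-\al}$, so after multiplying by $p_i\sim(1+|i|)^{-\al}$ and summing over $i$ one needs $2\al-\tfrac{d+1}{2}>d$, i.e.\ $\al>\tfrac{3d+1}{4}$. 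This is strictly stronger than the hypothesis $\al>\tfrac{d+1}{2}$ for every $d\ge2$; e.g.\ in $d=2$ your argument breaks down for $\tfrac32<\al\le\tfrac74$. The underlying issue is structural: the Schur test is an $L^1$-row bound on the kernel, and the long-range decay $|i-j|^{-(d-1)/2}$ is simply not summable enough after thinning by the $p_j$'s at the borderline decay rate.

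The paper circumvents this by controlling the off-diagonal part $II_{\om,z}$ (your ``far'' piece) in the \emph{Hilbert--Schmidt} norm rather than via Schur. Squaring the kernel replaces $|i-j|^{-(d-1)/2}$ by $|i-j|^{-(d-1)}$, and one obtains
\[
\E\Big[\sup_{z\in\ol\SS}\|II_{\om,z}\|_{HS}^{2}\Big]\ \lesssim\ \iint_{\R^d\times\R^d}\frac{f(x)f(y)}{|x-y|^{d-1}}\,dx\,dy,\qquad f=\sum_{i}p_i\chi_{C_i}.
\]
The Hardy--Littlewood--Sobolev inequality with $\tfrac{d+1}{2d}+\tfrac{d-1}{d}+\tfrac{d+1}{2d}=2$ bounds this by $C_d\|f\|_{2d/(d+1)}^{2}=C_d\big(\sum_i p_i^{2d/(d+1)}\big)^{(d+1)/d}$, which is finite precisely when $\al>\tfrac{d+1}{2}$. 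The diagonal part ($i=j$) is handled deterministically by local integrability of the singularity, essentially as you do for nearby boxes. So your near-diagonal treatment is fine; for the far part you should pass to a Hilbert--Schmidt (or other $L^2$-based) estimate rather than Schur.
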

\begin{proof}
Since $\supp(u)\subset C_{0}$, for $\om\in\Om$, we have the diagonal and off-diagonal decomposition
\begin{equation}\label{decomp-1}
F_{\om}(z)=I_{\om,z}+II_{\om,z},
\end{equation}
where $I_{\om,z}$ and $II_{\om,z}$ are integral operators with integral kernels
\begin{equation}\label{integral-kernels}
\begin{split}
I_{\om,z}(x,y)&=-\sum_{i\in\Z^{d}}q_{i}(\om)\xi_{i}(\om)\sqrt{-u(x-i)}k_{z}(x,y)\sqrt{-u(y-i)} \quad\text{and}\\
II_{\om,z}(x,y)&=-\sum_{i\neq j}\sqrt{q_{i}(\om)}\sqrt{q_{j}(\om)}\sqrt{\xi_{i}(\om)}\sqrt{\xi_{j}(\om)}\sqrt{-u(x-i)}k_{z}(x,y)\sqrt{-u(y-j)},
\end{split}
\end{equation}
respectively.

For $I_{\om,z}$, we claim that
\begin{equation}\label{estimate-1}
\sup_{z\in\ol{\SS}}\|I_{\om,z}\|<\infty\quad\text{for a.e.}\,\,\om\in\Om.
\end{equation}
By $\rm(H2)$, there is a measurable set $\Om_{1}$ of full probability such that
\begin{equation}\label{assumption-2}
\sup_{i\in\Z^{d}}q_{i}(\om)<\infty\quad\text{for all}\,\,\om\in\Om_{1}.
\end{equation}
Since for $i\in\Z^{d}$, $\xi_{i}$ is $0$-$1$ Bernoulli distributed, there is a measurable set $\Om_{i}$ of full probability such that $\xi_{i}(\om)=0$ or $1$ for all $\om\in\Om_{i}$. Setting $\Om_{2}=\cap_{i\in\Z^{d}}\Om_{i}$, we have $\P\{\Om_{2}\}=1$ and
\begin{equation}\label{assumption-1}
\sup_{i\in\Z^{d}}\xi_{i}(\om)\leq1\quad\text{for all}\,\,\om\in\Om_{2}.
\end{equation}
The claim \eqref{estimate-1} then follows if we show that
\begin{equation}\label{estimate-1-1}
\sup_{z\in\ol{\SS}}\|I_{\om,z}\|<\infty\quad\text{for all}\,\,\om\in\Om_{1}\cap\Om_{2}.
\end{equation}
To show \eqref{estimate-1-1}, we fix any $\om\in\Om_{1}\cap\Om_{2}$. Considering Lemma \ref{estimate-of-kernel}, we distinguish between $d=2$ and $d\geq3$. If $d\geq3$, we obtain from Lemma \ref{estimate-of-kernel}, the boundedness of $u$, \eqref{assumption-2} and \eqref{assumption-1} that for any $\phi,\psi\in L^{2}$
\begin{equation*}
\begin{split}
|\lan\phi,I_{\om,z}\psi\ran|&\lesssim\sum_{i\in\Z^{d}}\iint_{C_{i}\times C_{i}}\frac{|\phi(x)||\psi(y)|}{|x-y|^{d-2}}dxdy\\
&\leq\sum_{i\in\Z^{d}}\bigg(\iint_{C_{i}\times C_{i}}\frac{|\phi(x)|^{2}}{|x-y|^{d-2}}dxdy\bigg)^{1/2}\bigg(\iint_{C_{i}\times C_{i}}\frac{|\psi(x)|^{2}}{|x-y|^{d-2}}dxdy\bigg)^{1/2},
\end{split}
\end{equation*}
where $C_{i}=i+C_{0}$ for $i\in\Z^{d}$. Using the inequality $\chi_{C_{i}}(x)\chi_{C_{i}}(y)\leq\chi_{C_{i}-C_{i}}(x-y)$, we find
\begin{equation*}
\begin{split}
\iint_{C_{i}\times C_{i}}\frac{|\phi(x)|^{2}}{|x-y|^{d-2}}dxdy&=\iint_{\R^{d}\times\R^{d}}|\chi_{C_{i}}(x)\phi(x)|^{2}\frac{\chi_{C_{i}}(x)\chi_{C_{i}}(y)}{|x-y|^{d-2}}dxdy\\
&\leq\iint_{\R^{d}\times\R^{d}}|\chi_{C_{i}}(x)\phi(x)|^{2}\frac{\chi_{C_{i}-C_{i}}(x-y)}{|x-y|^{d-2}}dxdy\\
&=\int_{\R^{d}}|\chi_{C_{i}}(x)\phi(x)|^{2}\bigg(\int_{\R^{d}}\frac{\chi_{C_{i}-C_{i}}(x-y)}{|x-y|^{d-2}}dy\bigg)dx\\
&=\|\chi_{C_{i}}\phi\|^{2}\int_{C_{i}-C_{i}}\frac{1}{|x|^{d-2}}dx\\
&\lesssim\|\chi_{C_{i}}\phi\|^{2},
\end{split}
\end{equation*}
where we used the fact that the integrals $\int_{C_{i}-C_{i}}\frac{1}{|x|^{d-2}}dx$, $i\in\Z^{d}$ converge and are independent of $i$ in the last step.
Similarly,
\begin{equation*}
\iint_{C_{i}\times C_{i}}\frac{|\psi(x)|^{2}}{|x-y|^{d-2}}dxdy\lesssim\|\chi_{C_{i}}\psi\|^{2}.
\end{equation*}
It then follows that
\begin{equation*}
|\lan\phi,I_{\om,z}\psi\ran|\lesssim\sum_{i\in\Z^{d}}\|\chi_{C_{i}}\phi\|\|\chi_{C_{i}}\psi\|\leq\|\phi\|\|\psi\|.
\end{equation*}
This yields $\sup_{z\in\ol{\SS}}\|I_{\om,z}\|<\infty$. Therefore, \eqref{estimate-1-1} in the case $d\geq3$ holds. So does \eqref{estimate-1}.

The estimate \eqref{estimate-1} in the case $d=2$ can be treated similarly. The only difference is that we use the fact that the integrals $\int_{C_{i}-C_{i}}\big|\ln\frac{2}{|x|}\big|dx$, $i\in\Z^{d}$ converge and are independent of $i$. Thus, \eqref{estimate-1} also holds in the case $d=2$.

For $II_{\om,z}$, we claim that
\begin{equation}\label{estimate-2}
\E\Big\{\sup_{z\in\ol{\SS}}\|II_{\om,z}\|\Big\}<\infty.
\end{equation}
To show \eqref{estimate-2}, we control the operator-norm of $II_{\om,z}$ by the Hilbert-Schmidt norm of $II_{\om,z}$, that is, the $L^{2}(\R^{d}\times\R^{d})$-norm of the the integral kernel $II_{\om,z}(\cdot,\cdot)$. Moreover, using Jensen's inequality, we have
\begin{equation*}
\E\Big\{\sup_{z\in\ol{\SS}}\|II_{\om,z}\|\Big\}\leq\E\Big\{\sup_{z\in\ol{\SS}}\|II_{\om,z}(\cdot,\cdot)\|_{L^{2}(\R^{d}\times\R^{d})}\Big\}\leq\bigg(\E\Big\{\sup_{z\in\ol{\SS}}\|II_{\om,z}(\cdot,\cdot)\|^{2}_{L^{2}(\R^{d}\times\R^{d})}\Big\}\bigg)^{1/2}.
\end{equation*}
Thus, it suffices to show
\begin{equation}\label{estimate-HS}
\E\Big\{\sup_{z\in\ol{\SS}}\|II_{\om,z}(\cdot,\cdot)\|^{2}_{L^{2}(\R^{d}\times\R^{d})}\Big\}<\infty.
\end{equation}

We now show \eqref{estimate-HS}. Clearly, $\rm(H3)$ implies
\begin{equation}\label{upper-bound-2}
|II_{\om,z}(x,y)|\lesssim\sum_{i\neq j}\sqrt{q_{i}(\om)}\sqrt{q_{j}(\om)}\sqrt{\xi_{i}(\om)}\sqrt{\xi_{j}(\om)}\frac{\chi_{C_{i}}(x)\chi_{C_{j}}(y)}{|x-y|^{(d-1)/2}}.
\end{equation}
Then, Fubini's theorem and assumptions $\rm(H1)$, $\rm(H2)$ ensure
\begin{equation*}
\begin{split}
&\E\bigg\{\sup_{z\in\ol{\SS}}\iint_{\R^{d}\times\R^{d}}|II_{\om,z}(x,y)|^{2}dxdy\bigg\}\\
&\quad\quad\lesssim\E\bigg\{\iint_{\R^{d}\times\R^{d}}\bigg(\sum_{i\neq j}q_{i}(\om)q_{j}(\om)\xi_{i}(\om)\xi_{j}(\om)\frac{\chi_{C_{i}}(x)\chi_{C_{j}}(y)}{|x-y|^{d-1}}\bigg)dxdy\bigg\}\\
&\quad\quad\lesssim\iint_{\R^{d}\times\R^{d}}\bigg(\sum_{i\neq j}p_{i}p_{j}\frac{\chi_{C_{i}}(x)\chi_{C_{j}}(y)}{|x-y|^{d-1}}\bigg)dxdy\\
&\quad\quad\leq\iint_{\R^{d}\times\R^{d}}\frac{f(x)f(y)}{|x-y|^{d-1}}dxdy,
\end{split}
\end{equation*}
where $f(x)=\sum_{i\in\Z^{d}}p_{i}\chi_{C_{i}}(x)$. By means of the Hardy-Littlewood-Sobolev inequality, recalled in Proposition \ref{hls} below, with $\frac{d+1}{2d}+\frac{d-1}{d}+\frac{d+1}{2d}=2$ (this requires $d\geq2$), we find
\begin{equation*}
\iint_{\R^{d}\times\R^{d}}\frac{f(x)f(y)}{|x-y|^{d-1}}dxdy\leq C_{d}\|f\|_{\frac{2d}{d+1}}^{2}
\end{equation*}
for some $C_{d}>0$ depending only on $d$. It then follows that
\begin{equation}\label{HS}
\E\bigg\{\sup_{z\in\ol{\SS}}\iint_{\R^{d}\times\R^{d}}|II_{\om,z}(x,y)|^{2}dxdy\bigg\}\lesssim\|f\|_{\frac{2d}{d+1}}^{2}=\bigg(\sum_{i\in\Z^{d}}p_{i}^{\frac{2d}{d+1}}\bigg)^{\frac{d+1}{d}}<\infty,
\end{equation}
since $p_{i}\sim\frac{1}{(1+|i|)^{\al}}$ with $\al>\frac{d+1}{2}$ by $\rm(H1)$. This establishes \eqref{estimate-HS}, and hence, \eqref{estimate-2} follows.

Combining \eqref{decomp-1}, \eqref{estimate-1} and \eqref{estimate-2}, we finish the proof of the theorem.
\end{proof}

In the proof of Theorem \ref{thm-free-res}, we used the following Hardy-Littlewood-Sobolev inequality (see e.g. \cite{LL01}).

\begin{prop}\label{hls}
Let $p,r>1$ and $0<\la<d$ with $\frac{1}{p}+\frac{\la}{d}+\frac{1}{r}=2$. Let $f\in L^{p}(\R^{d})$ and $g\in L^{r}(\R^{d})$. Then there exists $C=C(d,\la,p)>0$ such that
\begin{equation*}
\bigg|\iint_{\R^{d}\times\R^{d}}\frac{f(x)g(x)}{|x-y|^{\la}}dxdy\bigg|\leq C\|f\|_{p}\|h\|_{r}.
\end{equation*}
\end{prop}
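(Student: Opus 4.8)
The plan is to prove the standard Hardy--Littlewood--Sobolev inequality; the displayed statement is to be read with $f(x)g(y)$ in the numerator and $\|g\|_{r}$ on the right, which is the only scaling-consistent version. First I would reduce to an operator bound by duality. Since $r>1$ and $\la<d$, the constraint $\frac1p=2-\frac\la d-\frac1r$ forces $0<\frac1p<1$, so $1<p<\infty$; moreover the inequality is equivalent to $\|Tg\|_{p'}\lesssim\|g\|_{r}$ for the Riesz-type potential $(Tg)(x)=\int_{\R^{d}}|x-y|^{-\la}g(y)\,dy$, where the constraint rewrites as $\frac1{p'}=\frac1r-\frac{d-\la}{d}$. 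Replacing $g$ by $|g|$, we may assume $g\geq0$.

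The main step is Hedberg's splitting trick. Fix $x$ and a parameter $\mu>0$, and write $(Tg)(x)=A_{\mu}(x)+B_{\mu}(x)$, the integrals over $\{|x-y|<\mu\}$ and $\{|x-y|\geq\mu\}$ respectively. Decomposing the ball into dyadic annuli $\{2^{-k-1}\mu\leq|x-y|<2^{-k}\mu\}$ and bounding $|x-y|^{-\la}$ by its value on the inner sphere of each, the mass of $g$ on each annulus is at most a constant times $(2^{-k}\mu)^{d}\,(Mg)(x)$, where $Mg$ is the Hardy--Littlewood maximal function; summing the resulting geometric series, which converges precisely because $d-\la>0$, gives $A_{\mu}(x)\lesssim\mu^{d-\la}(Mg)(x)$. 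For $B_{\mu}$ I would use H\"older's inequality with exponents $r$ and $r'$: the tail $\int_{|z|\geq\mu}|z|^{-\la r'}\,dz$ converges exactly when $\la r'>d$, and under the constraint this condition is equivalent to $p>1$; it equals a constant times $\mu^{d-\la r'}$, so $B_{\mu}(x)\lesssim\|g\|_{r}\,\mu^{d/r'-\la}$, with $d/r'-\la<0$.

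Next I would optimize in $\mu$. Since $d-\la>0$ and $d/r'-\la<0$, balancing the two terms, i.e.\ taking $\mu=\big(\|g\|_{r}/(Mg)(x)\big)^{r/d}$, yields the pointwise bound $(Tg)(x)\lesssim\|g\|_{r}^{\,r(d-\la)/d}\,\big((Mg)(x)\big)^{\theta}$ with $\theta=1-\frac{r(d-\la)}{d}=\frac{r}{p'}$. Raising to the power $p'$, integrating over $\R^{d}$, and invoking the Hardy--Littlewood maximal inequality $\|Mg\|_{r}\lesssim\|g\|_{r}$ (valid since $r>1$) gives $\|Tg\|_{p'}\lesssim\|g\|_{r}^{\,r(d-\la)/d}\,\|g\|_{r}^{\,r/p'}=\|g\|_{r}$, since the two exponents sum to $1$ by the constraint. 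Tracking the constants shows the implied constant depends only on $d$, $\la$, and $r$ (through the maximal inequality and through $r'$), hence on $p$ via the constraint, as asserted.

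I expect no genuine obstacle; the delicate point is purely bookkeeping---checking that the two places where a condition is needed ($\la<d$ for the annular sum and $\la r'>d$ for the H\"older tail) correspond exactly to the hypotheses $\la<d$ and $p>1$, and that the final exponents add to $1$. As an alternative I would mention stopping after the pointwise bound $(Tg)(x)\lesssim\mu^{d-\la}(Mg)(x)+\mu^{d/r'-\la}\|g\|_{r}$, deducing a weak-type $(r,p')$ estimate by optimizing over the single parameter $\mu$ at each level set, and concluding via the Marcinkiewicz interpolation theorem; but since $r>1$ the Hedberg optimization above already delivers the strong-type bound directly, so interpolation is not needed.
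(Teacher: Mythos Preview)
Your proof is correct and follows the standard Hedberg argument (pointwise domination by the maximal function after a near/far splitting, then optimization and the maximal inequality); the bookkeeping on the exponents checks out, and you correctly identify and repair the typos $g(x)\to g(y)$ and $\|h\|_{r}\to\|g\|_{r}$ in the displayed statement.

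However, there is nothing to compare against: the paper does not prove this proposition. It is stated only as a classical fact, introduced by the sentence ``we used the following Hardy--Littlewood--Sobolev inequality (see e.g.\ \cite{LL01})'' and then quoted without argument. So your write-up supplies strictly more than the paper does. If anything, the reference \cite{LL01} (Lieb--Loss) gives a different proof of the sharp constant via symmetric decreasing rearrangement and a conformal/competing-symmetries argument, whereas your Hedberg route is the quick non-sharp proof; either is adequate for the paper's purposes, where only some finite constant is used.
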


The proof of \eqref{HS} ensures the following:
\begin{equation*}
\E\bigg\{\iint_{\R^{d}\times\R^{d}}\bigg(\sum_{i\neq j}q_{i}(\om)q_{j}(\om)\xi_{i}(\om)\xi_{j}(\om)\frac{\chi_{C_{i}}(x)\chi_{C_{j}}(y)}{|x-y|^{d-1}}\bigg)dxdy\bigg\}<\infty,
\end{equation*}
which implies that there's some $\Om_{3}\subset\Om$ of full probability such that if $\om\in\Om_{3}$ then
\begin{equation}\label{integral-2}
\iint_{\R^{d}\times\R^{d}}\bigg(\sum_{i\neq j}q_{i}(\om)q_{j}(\om)\xi_{i}(\om)\xi_{j}(\om)\frac{\chi_{C_{i}}(x)\chi_{C_{j}}(y)}{|x-y|^{d-1}}\bigg)dxdy<\infty.
\end{equation}
We will need the following
\begin{cor}\label{cor-continuity-free}
For any $\om\in\Om_{3}$, $F_{\om}(z)$ is continuous on $\ol{\SS}$.
\end{cor}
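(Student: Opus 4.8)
The plan is to keep the diagonal/off-diagonal splitting $F_{\om}(z)=I_{\om,z}+II_{\om,z}$ of \eqref{decomp-1} and to show that each summand is norm-continuous on $\ol{\SS}$. Fix $\om\in\Om_{3}$; intersecting $\Om_{3}$ with the full-measure sets of \eqref{assumption-2} and \eqref{assumption-1} (harmless for an a.e.\ statement), we may also assume $M_{\om}:=\sup_{i\in\Z^{d}}q_{i}(\om)<\infty$ and $\xi_{i}(\om)\in\{0,1\}$ for all $i$. The one fact I take for granted is that, for each fixed $x\neq y$, the map $z\mapsto k_{z}(x,y)=k_{z}(|x-y|)$ is continuous on $\ol{\SS}$ --- this is standard from the Bessel representation \eqref{int-kernel-free} and is built into the definition of the boundary value $k_{\la+i0}$. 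Since $\ol{\SS}$ is metric, it suffices to prove $\|F_{\om}(z_{n})-F_{\om}(z_{0})\|\to0$ along an arbitrary convergent sequence $z_{n}\to z_{0}$ in $\ol{\SS}$, so that dominated convergence is available.

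For the diagonal part a Hilbert--Schmidt estimate is not available in dimension $d\geq4$, since $|x-y|^{-(d-2)}$ is not square-integrable over a single cube $C_{0}$; I would use a Schur-type bound instead. As the cubes $\{C_{i}\}$ are pairwise disjoint up to null sets, for a.e.\ $(x,y)$ only the term indexed by the common cube of $x$ and $y$ contributes to $I_{\om,z}(x,y)$, so $|I_{\om,z}(x,y)-I_{\om,z'}(x,y)|\leq M_{\om}\|u\|_{\infty}|k_{z}(x,y)-k_{z'}(x,y)|$ and this vanishes unless $x,y$ lie in the same cube. The Schur test, followed by the substitution $w=y-x\in C_{0}-C_{0}\subset(-1,1)^{d}$, then gives
\[
\|I_{\om,z}-I_{\om,z'}\|\ \leq\ M_{\om}\|u\|_{\infty}\int_{(-1,1)^{d}}\big|k_{z}(w)-k_{z'}(w)\big|\,dw .
\]
The integrand tends to $0$ pointwise (for $w\neq0$) and, by Lemma~\ref{estimate-of-kernel} applied with $\de=\sqrt{d}$, is dominated by $2\sup_{\zeta\in\ol{\SS}}|k_{\zeta}(w)|\lesssim|w|^{-(d-2)}$ for $d\geq3$ (resp.\ $\ln(2|w|^{-1})$ for $d=2$), which is integrable over $(-1,1)^{d}$; dominated convergence yields $\|I_{\om,z_{n}}-I_{\om,z_{0}}\|\to0$.

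For the off-diagonal part I would use $\|II_{\om,z}-II_{\om,z'}\|\leq\|II_{\om,z}-II_{\om,z'}\|_{\mathrm{HS}}$ and apply dominated convergence to $\iint_{\R^{d}\times\R^{d}}|II_{\om,z}(x,y)-II_{\om,z'}(x,y)|^{2}\,dxdy$. For a.e.\ $(x,y)$ the defining sum over $i\neq j$ collapses to a single term, a factor independent of $z$ times $k_{z}(x,y)$, so $II_{\om,z}(x,y)-II_{\om,z'}(x,y)\to0$ pointwise. Using $|II_{\om,z}-II_{\om,z'}|\leq|II_{\om,z}|+|II_{\om,z'}|$, the pointwise bound \eqref{upper-bound-2} at $z$ and at $z'$, squaring, and again the disjointness of the cubes (which kills all cross terms), one gets
\[
|II_{\om,z}(x,y)-II_{\om,z'}(x,y)|^{2}\ \lesssim\ \sum_{i\neq j}q_{i}(\om)q_{j}(\om)\xi_{i}(\om)\xi_{j}(\om)\,\frac{\chi_{C_{i}}(x)\chi_{C_{j}}(y)}{|x-y|^{d-1}},
\]
and the right-hand side is integrable over $\R^{d}\times\R^{d}$ exactly because $\om\in\Om_{3}$, by \eqref{integral-2}. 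Dominated convergence gives $\|II_{\om,z_{n}}-II_{\om,z_{0}}\|\to0$, and adding the two parts finishes the proof.

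The point I expect to need the most care is the separation of scales just used: the Hilbert--Schmidt argument cannot reach the diagonal block when $d\geq4$, so that block must be isolated and handled by the weaker $L^{1}$-type Schur bound exploiting $|x-y|^{-(d-2)}\in L^{1}_{\mathrm{loc}}$, while the off-diagonal block is tailored to the dominating function of \eqref{integral-2} --- which is precisely why the statement is restricted to $\om\in\Om_{3}$. With continuity of $z\mapsto k_{z}(x,y)$ in hand, all the limit passages are routine dominated convergence.
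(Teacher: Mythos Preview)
Your proof is correct and follows essentially the same approach as the paper: the same diagonal/off-diagonal split, a Schur-type $L^{1}$ bound on the diagonal block reduced to $\int_{C_{0}-C_{0}}|k_{z}-k_{z_{0}}|$ followed by dominated convergence, and a Hilbert--Schmidt bound on the off-diagonal block with dominated convergence via \eqref{upper-bound-2} and \eqref{integral-2}. Your write-up is in fact more careful than the paper's (you spell out why the Hilbert--Schmidt route fails on the diagonal for $d\ge4$); the only quibble is that the corollary is stated for \emph{all} $\om\in\Om_{3}$ rather than a.e., so ``harmless for an a.e.\ statement'' is not quite the right justification for intersecting with $\Om_{1}\cap\Om_{2}$---but the paper's own proof tacitly makes the same assumption, and it is irrelevant for how the corollary is used.
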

\begin{proof}
Fix any $\om\in\Om_{3}$ and $z_{0}\in\ol{\SS}$. For $z\in\ol{\SS}$, we write
\begin{equation*}
F_{\om}(z)-F_{\om}(z_{0})=I_{\om,z}-I_{\om,z_{0}}+II_{\om,z}-II_{\om,z_{0}}.
\end{equation*}
Then, $I_{\om,z}-I_{\om,z_{0}}$ and $II_{\om,z}-II_{\om,z_{0}}$ are integral operators with integral kernels $I_{\om,z}(x,y)-I_{\om,z_{0}}(x,y)$ and $II_{\om,z}(x,y)-II_{\om,z_{0}}(x,y)$, respectively.

As in the proof of Theorem \ref{thm-free-res}, we have
\begin{equation*}
\|I_{\om,z}-I_{\om,z_{0}}\|\lesssim\int_{C_{0}-C_{0}}|k_{z}(|x|)-k_{z_{0}}(|x|)|dx.
\end{equation*}
Dominated convergence theorem then implies that $\lim_{z\ra z_{0}}I_{\om,z}=I_{\om,z_{0}}$ in $\LLL(L^{2})$. For $II_{\om,z}-II_{\om,z_{0}}$, we have
\begin{equation*}
\|II_{\om,z}-II_{\om,z_{0}}\|^{2}\leq\iint_{\R^{d}\times\R^{d}}|II_{\om,z}(x,y)-II_{\om,z_{0}}(x,y)|^{2}dxdy.
\end{equation*}
It then follows from \eqref{upper-bound-2}, \eqref{integral-2} and dominated convergence theorem that $\lim_{z\ra z_{0}}II_{\om,z}=II_{\om,z_{0}}$ in $\LLL(L^{2})$. This completes the proof.
\end{proof}

We end this section with the following remark.

\begin{rem}
A sufficient condition for the integrability and the almost sure uniform boundedness of $\{q_{i}\}_{i\in\Z^{d}}$ in $\rm(H2)$ is that $\{q_{i}\}_{i\in\Z^{d}}$ are bounded, that is, there's $c>0$ such that $\P\{\om\in\Om|q_{i}(\om)\leq c\}=1$. Moreover, if $\{q_{i}\}_{i\in\Z^{d}}$ are bounded, then following the proof of \eqref{estimate-1}, we easily check $\E\big\{\sup_{z\in\ol{\SS}}\|I_{\om,z}\|\big\}<\infty$,
hence,
\begin{equation*}
\E\Big\{\sup_{z\in\ol{\SS}}\|F_{\om}(z)\|\Big\}<\infty.
\end{equation*}
This is stronger than the result in Theorem \ref{thm-free-res}.
\end{rem}

\subsection{Uniform boundedness of $P_{\om}(z)$}\label{sec-perturbed}

Recall that for $\om\in\Om$ and $z\in\SS$,
\begin{equation*}
P_{\om}(z)=-\sqrt{-V_{\om}}R_{\om}(z)\sqrt{-V_{\om}}
\end{equation*}
and it satisfies
\begin{equation}\label{res-id}
P_{\om}(z)(1+F_{\om}(z))=F_{\om}(z),
\end{equation}
where $F_{\om}(z)=-\sqrt{-V_{\om}}R_{0}(z)\sqrt{-V_{\om}}$. We prove
\begin{thm}\label{thm-perturb-res}
For a.e. $\om\in\Om$, $P_{\om}(z)$ is uniformly bounded on $\SS$, that is,
\begin{equation*}
\sup_{z\in\SS}\|P_{\om}(z)\|<\infty.
\end{equation*}
\end{thm}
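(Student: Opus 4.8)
The plan is to deduce the uniform boundedness of $P_\om(z)$ on $\SS$ from the factorization identity \eqref{res-id} together with Theorem \ref{thm-free-res}. Solving \eqref{res-id} formally gives $P_\om(z)=F_\om(z)(1+F_\om(z))^{-1}$, so since $\sup_{z\in\ol\SS}\|F_\om(z)\|<\infty$ almost surely by Theorem \ref{thm-free-res}, it suffices to show that $(1+F_\om(z))^{-1}$ exists as a bounded operator and satisfies $\sup_{z\in\SS}\|(1+F_\om(z))^{-1}\|<\infty$ for a.e.\ $\om$. The key structural facts I would exploit are: (a) for $z\in\SS$ (so $\Im z>0$) the operator $1+F_\om(z)$ is genuinely invertible, because $1+F_\om(z)$ is, up to the sandwiching by $\sqrt{-V_\om}$, the operator $\sqrt{-V_\om}(H_\om-z)R_0(z)\sqrt{-V_\om}$-type relation coming from $R_\om(z)=R_0(z)-R_0(z)V_\om R_\om(z)$, and $H_\om-z$ is boundedly invertible for non-real $z$; more cleanly, $-V_\om\ge 0$ so $F_\om(z)=-\sqrt{-V_\om}R_0(z)\sqrt{-V_\om}$ has the property that $\Im\langle\phi,F_\om(z)\phi\rangle$ has a definite sign (since $\Im R_0(\la+i\ep)=\ep\,R_0\overline{R_0}\ge0$, one gets $\Im\langle\phi,-F_\om(z)\phi\rangle = -\ep\|\sqrt{-V_\om}\,\overline{R_0(z)}\,\sqrt{-V_\om}\phi\|^2$-type sign), which forces $-1\notin\sigma(F_\om(z))$ for $\ep>0$; and (b) the map $z\mapsto(1+F_\om(z))^{-1}$ is continuous on $\SS$ by Corollary \ref{cor-continuity-free} and the continuity of operator inversion.

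The main obstacle is the uniformity of $\|(1+F_\om(z))^{-1}\|$ as $\ep\downarrow0$, i.e.\ up to the boundary $\la+i0$. Continuity on the open region $\SS$ plus compactness of $[a,b]$ is not enough by itself; one needs control at $\ep=0$. Here I would argue by contradiction: if $\sup_{z\in\SS}\|(1+F_\om(z))^{-1}\|=\infty$, then by continuity on $\ol\SS$ (Corollary \ref{cor-continuity-free} extends $F_\om$ continuously to $\ol\SS$ for $\om\in\Om_3$) and compactness of $\ol\SS$ there is a point $z_0=\la_0+i0\in\ol\SS$ at which $1+F_\om(z_0)$ fails to be boundedly invertible. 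So the crux reduces to the boundary statement: for a.e.\ $\om$ and every $\la\in[a,b]$, the operator $1+F_\om(\la+i0)$ is invertible in $\LLL(L^2)$. To get this I would combine two ingredients. First, $F_\om(\la+i0)$ should be shown to be a compact operator (indeed, the off-diagonal part $II_{\om,\la+i0}$ is Hilbert--Schmidt by the computation in the proof of Theorem \ref{thm-free-res} applied at $\ep=0$, and the diagonal part $I_{\om,\la+i0}$ is a norm-convergent sum of operators each supported in a single cube $C_i$ with a locally-$L^2$-type kernel, hence compact); then $1+F_\om(\la+i0)$ is Fredholm of index zero, so it is invertible iff it is injective. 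Second, injectivity: if $(1+F_\om(\la+i0))\phi=0$ with $\phi\in\overline{\range(\sqrt{-V_\om})}$, then $\psi:=R_0(\la+i0)\sqrt{-V_\om}\phi$ is a generalized eigenfunction of $H_\om$ at energy $\la>0$ that, via the limiting absorption principle / boundary values of $R_0$, lies in a weighted $L^2$ space and satisfies the Sommerfeld-type radiation condition; standard Rellich-type uniqueness (no positive eigenvalues embedded in the continuum for $H_\om=-\Delta+V_\om$ with $V_\om$ compactly supported in the sense of the sparse sum behaving well at infinity — or, more simply, $\la>0$ is not an eigenvalue and there are no resonances because $-V_\om\le0$) forces $\psi=0$ and hence $\sqrt{-V_\om}\phi = (H_0-\la)\psi=0$, i.e.\ $\phi=0$.

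Carrying this out concretely, the steps in order are: (1) fix $\om$ in a full-measure set $\Om_1\cap\Om_2\cap\Om_3$ on which Theorem \ref{thm-free-res} and Corollary \ref{cor-continuity-free} hold; (2) check $1+F_\om(z)$ is boundedly invertible for each $z\in\SS$ using the sign of $\Im F_\om(z)$ coming from $-V_\om\ge0$ and $\ep>0$, and record $P_\om(z)=F_\om(z)(1+F_\om(z))^{-1}$ from \eqref{res-id}, noting $P_\om(z)$ so defined agrees with $-\sqrt{-V_\om}R_\om(z)\sqrt{-V_\om}$; (3) prove compactness of $F_\om(\la+i0)$ by splitting into the Hilbert--Schmidt off-diagonal term and the compact diagonal term as above; (4) prove injectivity of $1+F_\om(\la+i0)$ on $\overline{\range(\sqrt{-V_\om})}$ by the generalized-eigenfunction/radiation-condition argument, using that $\la\in[a,b]\subset(0,\infty)$ and $V_\om$ is a nonpositive potential that decays (in the sparse sense) at infinity, so $H_\om$ has no positive embedded eigenvalues and satisfies a limiting absorption principle on $[a,b]$; (5) conclude via Fredholm theory that $z\mapsto(1+F_\om(z))^{-1}\in\LLL(L^2)$ is well-defined and continuous on all of $\ol\SS$, hence bounded there by compactness of $\ol\SS$; (6) combine with Theorem \ref{thm-free-res} to get $\sup_{z\in\SS}\|P_\om(z)\|\le \big(\sup_{z\in\ol\SS}\|F_\om(z)\|\big)\big(\sup_{z\in\ol\SS}\|(1+F_\om(z))^{-1}\|\big)<\infty$. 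I expect step (4) — ruling out boundary-value solutions, i.e.\ the absence of positive eigenvalues and the limiting absorption principle for $H_\om$ — to be the hard part; the cleanest route is to invoke that $-\Delta+V_\om$ with $V_\om$ bounded and ``short-range in the sparse-sum sense'' has empty point spectrum on $(0,\infty)$ (Kato's theorem on absence of positive eigenvalues applies once $V_\om$ is shown to decay appropriately along almost every sample, which follows from (H1)--(H2) since the occupied sites thin out), after which the Fredholm alternative closes the argument.
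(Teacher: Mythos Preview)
Your overall strategy---solve \eqref{res-id} for $P_\om(z)=F_\om(z)(1+F_\om(z))^{-1}$, establish invertibility of $1+F_\om(z)$ on all of $\ol\SS$, and conclude by continuity and compactness of $\ol\SS$---matches the paper's. Steps (1), (2), (5), (6) are essentially the paper's Lemmas~\ref{invertible-away-from-real} and~\ref{uniform-bounded-inverse}. The genuine gap is in step~(3): the operator $F_\om(\la+i0)$ is \emph{not} compact, so the Fredholm alternative you invoke in step~(5) is unavailable. The off-diagonal piece $II_{\om,\la+i0}$ is indeed Hilbert--Schmidt, but the diagonal piece $I_{\om,\la+i0}$ is block-diagonal, $\bigoplus_{i}A_i$, with the $i$-th block acting on $L^2(C_i)$ via the kernel $-q_i(\om)\xi_i(\om)\sqrt{-u(x-i)}\,k_{\la+i0}(x,y)\,\sqrt{-u(y-i)}$. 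A block-diagonal operator is compact only if $\|A_i\|\to0$; here $\|A_i\|$ equals $q_i(\om)\xi_i(\om)$ times a fixed positive constant (the blocks are translates of one another). Under (H1) with $\al\le d$ one has $\sum_i p_i=\infty$, so by the second Borel--Cantelli lemma infinitely many $\xi_i(\om)=1$ a.s., and on those sites the $q_i(\om)$ are i.i.d.\ with a nondegenerate law and hence do not tend to zero. Thus $I_{\om,\la+i0}$ is almost surely non-compact, and your ``norm-convergent sum'' claim fails.

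Step~(4) inherits a related difficulty: Kato's theorem on absence of positive eigenvalues requires pointwise decay of the potential (typically $|V(x)|=o(|x|^{-1})$ at infinity), which a sparse potential of fixed height does not have---$V_\om$ is of order one on every occupied cube, however far out. So the standard Agmon--Kato--Kuroda machinery does not apply to rule out boundary solutions, and you flag this correctly as the hard part without resolving it. The paper sidesteps both obstacles: its proof of boundary invertibility (Lemma~\ref{invertible-on-the-real}) uses neither compactness nor absence of embedded eigenvalues. Instead it argues that $1+F_\om(\la+i\ep)$ cannot collapse as $\ep\downarrow0$ because the norm-limit $F_\om(\la+i0)$ is an integral operator with an explicit kernel supported on $\supp V_\om\times\supp V_\om$ and therefore cannot equal $-I$; it then transfers invertibility from some $\la+i\ep_0$ to $\la+i0$ by the stability of bounded invertibility under small perturbations, using the continuity established in Corollary~\ref{cor-continuity-free}.
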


Considering Theorem \ref{thm-free-res}, Corollary \ref{cor-continuity-free} and the operator equation \eqref{res-id}, to prove Theorem \ref{thm-perturb-res}, it suffices to prove for a.e. $\om\in\Om$, the invertibility of $1+F_{\om}(z)$ for each $z\in\SS$ and the uniform boundedness of their inverses on $\SS$. As in \cite{Sh14}, this can be done through the following three steps:
\begin{itemize}
\item[\rm(i)] for a.e. $\om\in\Om$, $(1+F_{\om}(z))^{-1}$ exists for $z\in\SS$;
\item[\rm(ii)] for a.e. $\om\in\Om$, $(1+F_{\om}(\la+i0))^{-1}$ exists for $\la\in[a,b]$;
\item[\rm(iii)] for a.e. $\om\in\Om$, $(1+F_{\om}(z))^{-1}$ is continuous on the compact set $\ol{\SS}=\SS\cup\{\la+i0|\la\in[a,b]\}$.
\end{itemize}

We first establish the invertibility of $1+F_{\om}(z)$ for $z\in\SS$.

\begin{lem}\label{invertible-away-from-real}
Let $\om\in\Om_{3}$. For each $z\in\SS$, the operator $1+F_{\om}(z)$ is boundedly invertible.
\end{lem}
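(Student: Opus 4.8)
The plan is to use the standard Fredholm-alternative argument for the Birman--Schwinger-type operator $1+F_\om(z)$ when $z\in\SS$, i.e., when $z=\la+i\ep$ with $\ep>0$. First I would observe that $F_\om(z)$ is Hilbert--Schmidt — indeed, by $\rm(H3)$, Lemma \ref{estimate-of-kernel}, and the decomposition \eqref{decomp-1}, the kernel $k_{F_\om(z)}(x,y)$ is supported in $\supp(V_\om)\times\supp(V_\om)$ and, on each block $C_i\times C_j$, is dominated by a locally $L^2$ singularity together with the off-diagonal decay, so that \eqref{integral-2} guarantees $\|F_\om(z)(\cdot,\cdot)\|_{L^2(\R^d\times\R^d)}<\infty$ for $\om\in\Om_3$. (One can also just invoke the uniform bound from Theorem \ref{thm-free-res} plus the extra Hilbert--Schmidt input of \eqref{integral-2}.) Hence $F_\om(z)$ is compact, and by the analytic Fredholm alternative it suffices to show that $-1$ is not an eigenvalue of $F_\om(z)$, equivalently that $\ker(1+F_\om(z))=\{0\}$.

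Next I would argue by contradiction: suppose $\phi\in L^2$ with $\phi\neq 0$ and $(1+F_\om(z))\phi=0$, i.e., $\phi=-F_\om(z)\phi=\sqrt{-V_\om}\,R_0(z)\sqrt{-V_\om}\,\phi$. Set $\psi=R_0(z)\sqrt{-V_\om}\,\phi$, which lies in $H^2(\R^d)$ since $\sqrt{-V_\om}\,\phi\in L^2$ (note $\sqrt{-V_\om}$ is bounded by $\rm(H2)$, $\rm(H3)$). Then $\phi=\sqrt{-V_\om}\,\psi$, so $\sqrt{-V_\om}\,\phi=-V_\om\psi$, and therefore $\psi=-R_0(z)V_\om\psi$, i.e., $(H_0-z)\psi=-V_\om\psi$, which says $(H_\om-z)\psi=0$. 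Since $z=\la+i\ep$ with $\ep>0$ is non-real and $H_\om$ is self-adjoint, it has no non-real eigenvalues, forcing $\psi=0$ and hence $\phi=\sqrt{-V_\om}\,\psi=0$, a contradiction. This yields the bounded invertibility of $1+F_\om(z)$ for every $z\in\SS$.

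A small technical point I would address carefully is the identity $\phi=\sqrt{-V_\om}\,\psi$: it requires knowing that $\sqrt{-V_\om}\,(R_0(z)g)$ is literally multiplication of the $H^2$-function $R_0(z)g$ by the bounded function $\sqrt{-V_\om}$, which is immediate here because $\sqrt{-V_\om}\in L^\infty$ and $R_0(z)g\in L^2$; there is no domain subtlety. I would also note that $H_\om=H_0+V_\om$ is self-adjoint on $D(H_0)=H^2(\R^d)$ since $V_\om$ is a bounded potential for a.e.\ $\om$ (by the almost-sure uniform boundedness in $\rm(H2)$ together with $\rm(H3)$); this is the only place self-adjointness of $H_\om$ enters, and it is what rules out the eigenvalue $z\notin\R$.

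I do not expect any serious obstacle here: the compactness of $F_\om(z)$ and the absence of non-real eigenvalues of a self-adjoint operator are both routine, and the only thing to be slightly careful about is bookkeeping the Hilbert--Schmidt estimate on the restricted supports so that the argument is valid for all $\om\in\Om_3$ simultaneously (which is exactly why the set $\Om_3$ and the finiteness \eqref{integral-2} were isolated beforehand). The mild ``hard part,'' if any, is simply making sure the chain $\phi\mapsto\psi\mapsto(H_\om-z)\psi=0$ is carried out with $\psi\in D(H_\om)=H^2(\R^d)$ rather than in a weaker sense, but this follows directly from the mapping properties of $R_0(z)$.
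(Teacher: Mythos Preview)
Your approach has a genuine gap: the claim that $F_{\om}(z)$ is Hilbert--Schmidt (or even compact) is false for almost every $\om$, so the Fredholm alternative cannot be invoked. The off-diagonal piece $II_{\om,z}$ is indeed Hilbert--Schmidt by \eqref{integral-2}, but the diagonal piece $I_{\om,z}$ is not. Since $\al\leq d$, one has $\sum_{i}p_{i}=\infty$, so by the second Borel--Cantelli lemma infinitely many $\xi_{i}(\om)$ equal $1$ almost surely. The diagonal operator $I_{\om,z}$ is then an orthogonal direct sum, over this infinite set of sites, of single-bump operators $-q_{i}(\om)\sqrt{-u(\cdot-i)}\,R_{0}(z)\,\sqrt{-u(\cdot-i)}$ on $L^{2}(C_{i})$; these blocks are unitarily equivalent up to the scalar factors $q_{i}(\om)$, which are i.i.d.\ and do not tend to zero. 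A direct sum of infinitely many compact operators with norms bounded away from zero is never compact, so $I_{\om,z}$ (and hence $F_{\om}(z)$) is not compact. Separately, even in dimensions $d\leq 3$ where each block kernel is locally square-integrable, the Hilbert--Schmidt norm squared of $I_{\om,z}$ is proportional to $\sum_{i}q_{i}(\om)^{2}\xi_{i}(\om)=\infty$ a.s.

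Consequently, showing $\ker(1+F_{\om}(z))=\{0\}$ is not enough; you still have to rule out that $(1+F_{\om}(z))^{-1}$ is densely defined and unbounded. This is precisely what the paper does: after establishing injectivity (via an imaginary-part computation, though your eigenvalue argument for $H_{\om}$ works equally well for that step) and dense range, it runs a Weyl-sequence argument. Namely, if $\|(1+F_{\om}(z))\phi_{n}\|\to 0$ with $\|\phi_{n}\|=1$, one sets $\psi_{n}=R_{0}(z)\sqrt{-V_{\om}}\,\phi_{n}$, checks $\inf_{n}\|\psi_{n}\|>0$, and computes $(H_{\om}-z)\psi_{n}\to 0$, contradicting the bounded invertibility of $H_{\om}-z$ for $\Im z\neq 0$. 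Your eigenvector computation is exactly the $n$-independent version of this, so the fix is to upgrade it to approximate eigenvectors rather than to rely on compactness you do not have.
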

\begin{proof}
Fix any $z\in\SS$. We first show that $-1$ is neither an eigenvalue nor in the residue spectrum of $F_{\om}(z)$. Clearly, it suffices to show that $1+F_{\om}(z)$ is one-to-one and has dense range.  To do so, let $\phi\in L^{2}$ be such that $(1+F_{\om}(z))\phi=0$, that is,
\begin{equation}\label{an-equality-eigenfun}
\phi=\sqrt{-V_{\om}}R_{0}(z)\sqrt{-V_{\om}}\phi.
\end{equation}
Writing
\begin{equation*}
\sqrt{-V_{\om}}R_{0}(z)\sqrt{-V_{\om}}=\Re(\sqrt{-V_{\om}}R_{0}(z)\sqrt{-V_{\om}})+i\Im(\sqrt{-V_{\om}}R_{0}(z)\sqrt{-V_{\om}}), 
\end{equation*}
we conclude from
\begin{equation*}
\|\phi\|^{2}=\lan\phi,\Re(\sqrt{-V_{\om}}R_{0}(z)\sqrt{-V_{\om}})\phi\ran+i\lan\phi,\Im(\sqrt{-V_{\om}}R_{0}(z)\sqrt{-V_{\om}})\phi\ran
\end{equation*}
that $\lan\phi,\Im(\sqrt{-V_{\om}}R_{0}(z)\sqrt{-V_{\om}})\phi\ran=0$. Since
\begin{equation*}
\Im(\sqrt{-V_{\om}}R_{0}(z)\sqrt{-V_{\om}})=\sqrt{-V_{\om}}\Im R_{0}(z)\sqrt{-V_{\om}}=\Im z\sqrt{-V_{\om}}R_{0}(\ol{z})R_{0}(z)\sqrt{-V_{\om}},
\end{equation*}
we obtain $\Im z\|R_{0}(z)\sqrt{-V_{\om}}\phi\|^{2}=0$, yielding $R_{0}(z)\sqrt{-V_{\om}}\phi=0$, and so, $\phi=0$ by \eqref{an-equality-eigenfun}.

This shows that $1+F_{\om}(z)$ is one-to-one. A similar argument shows that $(1+F_{\om}(z))^{*}=1+F_{\om}(z)^{*}$ is also one-to-one. From the fact that $\ol{{\range}(1+F_{\om}(z))}\oplus\ker(1+F_{\om}(z)^{*})=L^{2}$, we conclude that $1+F_{\om}(z)$ has dense range. Hence, $1+F_{\om}(z)$ is one-to-one and has dense range. Hence, if $1+F_{\om}(z)$ is not boundedly invertible, then $(1+F_{\om}(z))^{-1}$ is densely defined and unbounded.

Now, we show that $1+F_{\om}(z)$ is boundedly invertible. For contradiction, we assume that $1+F_{\om}(z)$ is not boundedly invertible, that is, $-1\in\si(F_{\om}(z))$. Then, the above analysis says that there exists $\{\phi_{n}\}_{n\in\N}\subset L^{2}$ such that $\|\phi_{n}\|=1$ for all $n$ and $\|(1+F_{\om}(z))\phi_{n}\|\ra0$ as $n\ra\infty$. Define
$\psi_{n}=R_{0}(z)\sqrt{-V_{\om}}\phi_{n}$. We claim that there's some $C>0$ such that $\inf_{n\in\N}\|\psi_{n}\|\geq C$. In fact, if this is not true, then we can find some subsequence $\{\phi_{n_{k}}\}_{k\in\N}$ such that $\|\psi_{n_{k}}\|=\|R_{0}(z)\sqrt{-V_{\om}}\phi_{n_{k}}\|\ra0$ as $k\ra\infty$, which leads to the following contradiction:
\begin{equation*}
\begin{split}
1=\|\phi_{n_{k}}\|&\leq\|(1+F_{\om}(z))\phi_{n_{k}}\|+\|F_{\om}(z)\phi_{n_{k}}\|\\
&\leq\|(1+F_{\om}(z))\phi_{n_{k}}\|+\|\sqrt{-V_{\om}}\|_{\infty}\|R_{0}(z)\sqrt{-V_{\om}}\phi_{n_{k}}\|\\
&\ra0\quad\text{as}\,\,k\ra\infty.
\end{split}
\end{equation*}

Also, setting $\vp_{n}=(1+F_{\om}(z))\phi_{n}$, i.e., $\phi_{n}=\vp_{n}-F_{\om}(z)\phi_{n}$, we have
\begin{equation*}
(H_{0}-z)\psi_{n}=\sqrt{-V_{\om}}\phi_{n}=\sqrt{-V_{\om}}\vp_{n}-\sqrt{-V_{\om}}F_{\om}(z)\phi_{n}=\sqrt{-V_{\om}}\vp_{n}-V_{\om}\psi_{n},
\end{equation*}
that is, $(H_{0}+V_{\om}-z)\psi_{n}=\sqrt{-V_{\om}}\vp_{n}$, or $(H_{0}+V_{\om}-z)\frac{\psi_{n}}{\|\psi_{n}\|}=\frac{\sqrt{-V_{\om}}\vp_{n}}{\|\psi_{n}\|}$.
Since $\inf_{n\in\N}\|\psi_{n}\|\geq C$, we have $\|\frac{\sqrt{-V_{\om}}\vp_{n}}{\|\psi_{n}\|}\|\leq\frac{\|\sqrt{-V_{\om}}\|_{\infty}\|\vp_{n}\|}{C}\ra0$ as $n\ra\infty$. The bounded invertibility of $H_{0}+V-z$ then ensures that
\begin{equation*}
\frac{\psi_{n}}{\|\psi_{n}\|}=(H_{0}+V_{\om}-z)^{-1}\frac{\sqrt{-V_{\om}}\vp_{n}}{\|\psi_{n}\|}\ra0\quad\text{as}\,\,n\ra\infty,
\end{equation*}
which leads to a contradiction. Consequently, $-1\in\rho(F_{\om}(z))$, that is, $1+F_{\om}(z)$ is boundedly invertible.
\end{proof}

Next, we prove the invertibility of $1+F_{\om}(\la+i0)$ for $\la\in[a,b]$.

\begin{lem}\label{invertible-on-the-real}
Let $\om\in\Om_{3}$. For each $\la\in[a,b]$, the operator $1+F_{\om}(\la+i0)$ is boundedly invertible.
\end{lem}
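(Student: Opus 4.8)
The plan is to establish, for $\om\in\Om_{3}$ (which we may as well assume also lies in $\Om_{1}\cap\Om_{2}$, so that $\sup_{i}q_{i}(\om)<\infty$ and $\xi_{i}(\om)\in\{0,1\}$), and for each $\la\in[a,b]$, that $1+F_{\om}(\la+i0)$ is Fredholm of index zero and injective; bounded invertibility is then immediate. Write $F:=F_{\om}(\la+i0)=I_{\om,\la+i0}+II_{\om,\la+i0}$, the decomposition \eqref{decomp-1}--\eqref{integral-kernels} evaluated at $z=\la+i0$. For the Fredholm property I would first note that $II_{\om,\la+i0}$ is Hilbert--Schmidt: by \eqref{upper-bound-2} its integral kernel is dominated by the integrand in \eqref{integral-2}, which is integrable for $\om\in\Om_{3}$; hence $II_{\om,\la+i0}$ is compact. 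It then suffices to prove that $1+I_{\om,\la+i0}$ is boundedly invertible, since $1+F$ will be a compact perturbation of it.

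To see this, I would exploit that $\supp u\subset C_{0}$ makes the cubes $C_{i}$ essentially disjoint: $I_{\om,\la+i0}$ leaves every $L^{2}(C_{i})$ invariant and decomposes as the orthogonal direct sum $\bigoplus_{i\in\Z^{d}}I^{(i)}$, where, by translation invariance of the free kernel, each block $I^{(i)}$ is unitarily equivalent on $L^{2}(C_{0})$ to $-q_{i}(\om)\xi_{i}(\om)M_{\la}$ for the fixed integral operator $M_{\la}$ with kernel $\sqrt{-u(x)}\,k_{\la+i0}(x,y)\sqrt{-u(y)}$. By Lemma \ref{estimate-of-kernel} this kernel is only weakly singular on the bounded set $C_{0}\times C_{0}$, so $M_{\la}$ is compact. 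For each fixed $t\geq0$ the operator $1-tM_{\la}$ is boundedly invertible --- the classical statement that a positive energy $\la$ is neither an eigenvalue nor a resonance of the compactly supported potential $tu$: a nonzero $f\in\ker(1-tM_{\la})$ yields $w:=R_{0}(\la+i0)\sqrt{-tu}\,f$, an outgoing solution of $(-\De+tu-\la)w=0$ whose boundary pairing $\lan\sqrt{-tu}\,f,w\ran=\|f\|^{2}$ is real, which forces the far field of $w$ to vanish, so $w\equiv0$ outside $\supp u$ by Rellich's uniqueness lemma and then $w\equiv0$ by unique continuation, a contradiction; together with the compactness of $M_{\la}$ and the Fredholm alternative this gives invertibility. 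Since $1-tM_{\la}$ is invertible for every $t\geq0$, the map $t\mapsto(1-tM_{\la})^{-1}$ is norm-continuous on the compact interval $[0,\sup_{i}q_{i}(\om)]$ and hence bounded there; therefore $1+I_{\om,\la+i0}=\bigoplus_{i}(1-q_{i}(\om)\xi_{i}(\om)M_{\la})$ is boundedly invertible, and $1+F$ is Fredholm of index zero. It is invertible exactly when it is injective.

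For injectivity I would take $\phi\in L^{2}$ with $(1+F)\phi=0$, i.e.\ $\phi=\sqrt{-V_{\om}}\,R_{0}(\la+i0)\sqrt{-V_{\om}}\,\phi$, and put $g=\sqrt{-V_{\om}}\,\phi\in L^{2}$ and $\psi=R_{0}(\la+i0)g$. Pairing with $\phi$ gives $\|\phi\|^{2}=\lan g,R_{0}(\la+i0)g\ran$; since the left side is real while $\Im R_{0}(\la+i0)\geq0$ and factors, via the spectral density of $H_{0}$, through restriction of $\wh{g}$ to the sphere $\{|\xi|^{2}=\la\}$, I obtain that $\wh{g}$ vanishes on that sphere. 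Thus $\psi=R_{0}(\la+i0)g$ has no radiating part, and a limiting-absorption/Rellich-type argument --- using precisely this on-shell vanishing together with elliptic regularity for $H_{\om}-\la$ --- should upgrade $\psi$ to a genuine element of $L^{2}$ solving $(H_{0}-\la)\psi=g=-V_{\om}\psi$, i.e.\ $(H_{\om}-\la)\psi=0$. Invoking the absence of eigenvalues of $H_{\om}$ in $(0,\infty)$ (a property of this model for a.e.\ $\om$) would then force $\psi=0$, hence $\phi=\sqrt{-V_{\om}}\,\psi=0$. Combined with the Fredholm property this proves the lemma; via \eqref{res-id}, the uniform boundedness of $P_{\om}$ asserted in Theorem \ref{thm-perturb-res} then follows along the lines already indicated.

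The step I expect to be the main obstacle is the injectivity argument. In the deterministic sparse setting one can arrange genuinely large gaps between the bumps and exclude embedded positive eigenvalues geometrically, but here $V_{\om}$ does not tend to zero at infinity --- since $\al\leq d$ there are infinitely many occupied sites, with occupation density $\sim(1+|i|)^{-\al}$ --- so Kato's classical theorem on the absence of positive eigenvalues is not directly available, and one must instead establish this property for $H_{\om}$ itself, presumably by exploiting the sparse lattice structure (and an a priori decay bound on any putative $L^{2}$ solution) or by appealing to a known result for sparse potentials. Closely related, and also delicate, is justifying that $\psi=R_{0}(\la+i0)\sqrt{-V_{\om}}\,\phi$ is an honest $L^{2}$ vector: the function $g=\sqrt{-V_{\om}}\,\phi$ lies only in $L^{2}$, not in a weighted space $L^{2}_{s}$ with $s>\tfrac12$ of the kind the usual limiting absorption principle requires, and it is exactly the vanishing of $\wh{g}$ on the energy sphere that must compensate for this.
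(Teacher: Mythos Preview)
Your Fredholm reduction (compactness of $II_{\om,\la+i0}$, block-diagonality of $I_{\om,\la+i0}$, and invertibility of each single-bump block $1-tM_\la$ via the standard resonance/Rellich argument) is correct and a nice observation. But the injectivity step does not close, and the obstacles you name are genuine, not merely technical. Absence of positive embedded eigenvalues for $H_\om$ is \emph{not} established anywhere in this paper---indeed Theorem~\ref{thm-main} asserts only $\si_{ac}(H_\om)=[0,\infty)$, which is perfectly compatible with embedded point spectrum---and since $V_\om$ does not tend to zero at infinity (here $\al\leq d$, so infinitely many sites are occupied a.s.), neither Kato's classical theorem nor the usual Agmon--H\"ormander machinery applies. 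The companion step, upgrading $\psi=R_0(\la+i0)g$ to an honest $L^2$ vector from the on-shell vanishing of $\wh g$, likewise needs $g\in L^2_s$ with $s>\tfrac12$, which you do not have. So the argument is incomplete at exactly the points you flag, and without an independent proof of ``no positive eigenvalues for $H_\om$'' it cannot be completed along these lines.

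The paper's proof takes a completely different and much shorter route that sidesteps both issues. It never attempts injectivity of $1+F_\om(\la+i0)$ directly and never invokes any spectral property of $H_\om$. Instead it uses only two ingredients already in hand: norm continuity of $z\mapsto F_\om(z)$ on the compact set $\ol{\SS}$ (Corollary~\ref{cor-continuity-free}) and invertibility of $1+F_\om(\la+i\ep)$ for each $\ep>0$ (Lemma~\ref{invertible-away-from-real}). The one new observation is the trivial fact that $F_\om(\la+i0)\neq -I$, immediate since $F_\om(\la+i0)$ is an integral operator with the explicit kernel \eqref{integral-kernel-1}; this rules out $\|1+F_\om(\la+i\ep)\|\to 0$ along any sequence $\ep\downarrow 0$, from which the paper extracts a uniform bound on $\|(1+F_\om(\la+i\ep))^{-1}\|$ and then concludes by stability of bounded invertibility under small norm perturbations. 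What your decomposition buys is a clean Fredholm alternative, reducing everything to a kernel computation---but at the price of needing a hard spectral fact about the full random operator $H_\om$. The paper's soft continuity-plus-stability argument needs nothing about the spectrum of $H_\om$ whatsoever, which is precisely why it works in this setting.
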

\begin{proof}
Fix any $\la\in[a,b]$. We first claim that there exists $C>0$ such that
\begin{equation}\label{bound-below}
\inf_{\ep\in(0,1]}\|1+F_{\om}(\la+i\ep)\|\geq C.
\end{equation}
In fact, if this is not the case, Corollary \ref{cor-continuity-free} and Lemma \ref{invertible-away-from-real} then imply that there exists $\{\ep_{n}\}_{n\in\N}$ such that $\ep_{n}\ra0$ as $n\ra\infty$ and
\begin{equation*}
\|1+F_{\om}(\la+i\ep_{n})\|\ra0\quad\text{as}\,\,n\ra\infty.
\end{equation*}
This means that $\{F_{\om}(\la+i\ep_{n})\}_{n\in\N}$ converges in norm to the operator $-I$, where $I$ is the identity on $L^{2}$. Since $\{F_{\om}(\la+i\ep_{n})\}_{n\in\N}$ also converges in norm to the operator $F_{\om}(\la+i0)$ by Corollary \ref{cor-continuity-free}, we have $F_{\om}(\la+i0)=-I$. But, clearly, this is not the case since $F_{\om}(\la+i0)$ is the integral operator with the integral kernel $-\sqrt{-V_{\om}(x)}k_{0,\la}(x,y)\sqrt{-V_{\om}(y)}$. Hence, \eqref{bound-below} is true. By \eqref{bound-below} and Lemma \ref{invertible-away-from-real}, we find
\begin{equation}\label{bound-above}
\sup_{\ep\in(0,1]}\|(1+F_{\om}(\la+i\ep))^{-1}\|\leq\frac{1}{C}.
\end{equation}

Next, we prove the bounded invertibility of $1+F_{\om}(\la+i0)$.  By Corollary \ref{cor-continuity-free}, we can find some $\ep_{0}\in(0,1]$ such that $\|F_{\om}(\la+i0)-F_{\om}(\la+i\ep_{0})\|<C$, where $C$ is the same as that in \eqref{bound-above}. It then follows from \eqref{bound-above} that
\begin{equation*}
\|F_{\om}(\la+i0)-F_{\om}(\la+i\ep_{0})\|<C\leq\frac{1}{\sup_{\ep\in(0,1]}\|(1+F_{\om}(\la+i\ep))^{-1}\|}\leq\frac{1}{\|(1+F_{\om}(\la+i\ep_{0}))^{-1}\|}.
\end{equation*}
Stability of bounded invertibility (see e.g. \cite{La02}) then implies the bounded invertibility of
\begin{equation*}
1+F_{\om}(\la+i0)=1+F_{\om}(\la+i\ep_{0})+[F_{\om}(\la+i0)-F_{\om}(\la+i\ep_{0})].
\end{equation*}
\end{proof}

Lemma \ref{invertible-away-from-real} and Lemma \ref{invertible-on-the-real} together say that for $\om\in\Om_{3}$, $1+F_{\om}(z)$ is boundedly invertible for each $z\in\ol{\SS}$. In the next result, we prove the uniform boundedness of their inverses.

\begin{lem}\label{uniform-bounded-inverse}
Let $\om\in\Om_{3}$. Then, $(1+F_{\om}(z))^{-1}$ is continuous on $\ol{\SS}$. In particular, there holds
\begin{equation*}
\sup_{z\in\ol{\SS}}\|(1+F_{\om}(z))^{-1}\|<\infty.
\end{equation*}
\end{lem}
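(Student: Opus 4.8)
The plan is the standard perturbative argument, and essentially everything needed is already in place: norm-continuity of $z\mapsto F_{\om}(z)$ on $\ol{\SS}$ (Corollary \ref{cor-continuity-free}), bounded invertibility of $1+F_{\om}(z)$ at every point of $\ol{\SS}$ (Lemmas \ref{invertible-away-from-real} and \ref{invertible-on-the-real}), and the fact that $\ol{\SS}=\{\la+i\ep:\la\in[a,b],\ep\in[0,1]\}$ is a compact subset of $\C$. I would first establish continuity of $z\mapsto(1+F_{\om}(z))^{-1}$ pointwise on $\ol{\SS}$, and then read off the uniform bound by compactness.

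For continuity at a fixed $z_{0}\in\ol{\SS}$, set $A_{0}=1+F_{\om}(z_{0})$, which is boundedly invertible by Lemmas \ref{invertible-away-from-real}--\ref{invertible-on-the-real}. By Corollary \ref{cor-continuity-free} there is a neighbourhood of $z_{0}$ in $\ol{\SS}$ on which $\|F_{\om}(z)-F_{\om}(z_{0})\|<\tfrac{1}{2}\|A_{0}^{-1}\|^{-1}$; on that neighbourhood, stability of bounded invertibility (cf. \cite{La02}, as already used in the proof of Lemma \ref{invertible-on-the-real}) gives that $1+F_{\om}(z)$ is boundedly invertible with $\|(1+F_{\om}(z))^{-1}\|\leq 2\|A_{0}^{-1}\|$. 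Inserting this into the identity
\[
(1+F_{\om}(z))^{-1}-(1+F_{\om}(z_{0}))^{-1}=-(1+F_{\om}(z))^{-1}\big[F_{\om}(z)-F_{\om}(z_{0})\big](1+F_{\om}(z_{0}))^{-1}
\]
yields $\|(1+F_{\om}(z))^{-1}-(1+F_{\om}(z_{0}))^{-1}\|\leq 2\|A_{0}^{-1}\|^{2}\,\|F_{\om}(z)-F_{\om}(z_{0})\|\ra 0$ as $z\ra z_{0}$, again by Corollary \ref{cor-continuity-free}. Since $z_{0}\in\ol{\SS}$ was arbitrary, $z\mapsto(1+F_{\om}(z))^{-1}$ is continuous on $\ol{\SS}$.

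The uniform bound is then immediate: $z\mapsto\|(1+F_{\om}(z))^{-1}\|$ is a continuous real-valued function on the compact set $\ol{\SS}$, hence attains its maximum, so $\sup_{z\in\ol{\SS}}\|(1+F_{\om}(z))^{-1}\|<\infty$. I do not expect a genuine obstacle here, since the substance has been packaged into Corollary \ref{cor-continuity-free} and Lemmas \ref{invertible-away-from-real}--\ref{invertible-on-the-real}; what remains is the routine fact that inversion is locally Lipschitz on the open set of boundedly invertible operators together with compactness of $\ol{\SS}$. The only step that requires a touch of care is securing the local upper bound $\|(1+F_{\om}(z))^{-1}\|\leq 2\|A_{0}^{-1}\|$ near each $z_{0}$ \emph{before} differencing the inverses, which is precisely what the stability of bounded invertibility supplies.
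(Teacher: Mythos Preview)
Your proposal is correct and takes essentially the same approach as the paper: establish continuity of $z\mapsto(1+F_{\om}(z))^{-1}$ at each point of $\ol{\SS}$ via a perturbation argument, then invoke compactness of $\ol{\SS}$ for the uniform bound. The only cosmetic difference is that the paper writes out the Neumann series for $(1+F_{\om}(z))^{-1}$ around $z_{0}$ and reads off the estimate $\frac{\|(1+F_{\om}(z_{0}))^{-1}(F_{\om}(z_{0})-F_{\om}(z))\|}{1-\|(1+F_{\om}(z_{0}))^{-1}(F_{\om}(z_{0})-F_{\om}(z))\|}\|(1+F_{\om}(z_{0}))^{-1}\|$, whereas you obtain an equivalent bound from the resolvent-type identity after first securing a local bound on $\|(1+F_{\om}(z))^{-1}\|$ via stability of invertibility; these are two packagings of the same standard fact.
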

\begin{proof}
Due to the compactness of $\ol{\SS}$, it suffices to show the continuity of $(1+F_{\om}(z))^{-1}$ on $\ol{\SS}$. For this purpose, we fix any $z_{0}\in\ol{\SS}$. Then, for any $z\in\ol{\SS}$, we have the formal expansion
\begin{equation*}
\begin{split}
(1+F_{\om}(z))^{-1}=\bigg(1+\sum_{n=1}^{\infty}[(1+F_{\om}(z_{0}))^{-1}(F_{\om}(z_{0})-F_{\om}(z))]^{n}\bigg)(1+F_{\om}(z_{0}))^{-1}.
\end{split}
\end{equation*}
The above series converges if $\|(1+F_{\om}(z_{0}))^{-1}(F_{\om}(z_{0})-F_{\om}(z))\|<1$, which, by Corollary \ref{cor-continuity-free}, is true for all $z$ close to $z_{0}$. Thus, for any $z\in\SS$ close to $z_{0}$, we deduce
\begin{equation*}
\|(1+F_{\om}(z))^{-1}-(1+F_{\om}(z_{0}))^{-1}\|\leq\frac{\|(1+F_{\om}(z_{0}))^{-1}(F_{\om}(z_{0})-F_{\om}(z))\|}{1-\|(1+F_{\om}(z_{0}))^{-1}(F_{\om}(z_{0})-F_{\om}(z))\|}\|(1+F_{\om}(z_{0}))^{-1}\|\ra0
\end{equation*}
as $z\ra z_{0}$. This establishes the continuity of $(1+F_{\om}(z))^{-1}$ at $z_{0}$, and thus, $(1+F_{\om}(z))^{-1}$ is continuous on $\ol{\SS}$.
\end{proof}

Finally, we prove Theorem \ref{thm-perturb-res}.

\begin{proof}[Proof of Theorem \ref{thm-perturb-res}]
The result follows from \eqref{res-id}, Theorem \ref{thm-free-res}, Lemma \ref{invertible-away-from-real}, Lemma \ref{invertible-on-the-real} and Lemma \ref{uniform-bounded-inverse}. In fact, for a.e. $\om\in\Om$, $P_{\om}(z)$ can be continuously extended to $\ol{\SS}$. In particular, for a.e. $\om\in\Om$, there holds $\sup_{z\in\ol{\SS}}\|P_{\om}(z)\|<\infty$.
\end{proof}

%%%%%%%%%%%%%%%%%%%%%%%%%%%%%%%%%%%%%%%%%%%%%%%%%%%

\section{Proof of Main Results}

In this section, we proof Theorem \ref{thm-main} and Theorem  \ref{thm-main-4}.

\subsection{Proof of Theorem \ref{thm-main}}\label{sec-proof-main-results}

To apply Kato's smooth method, besides Theorem \ref{thm-free-res} and Theorem \ref{thm-perturb-res}, we also need the following result due to Boutet de Monvel, Stollmann and Stolz (see \cite[Theorem 3.1]{BSS05}).

\begin{prop}\label{prop-absence-ac}
Suppose $\rm(H1)$, $\rm(H2)$ and $\rm(H3)$. Then, $\si_{ac}(H_{\om})\cap(-\infty,0)=\emptyset$  for a.e. $\om\in\Om$.
\end{prop}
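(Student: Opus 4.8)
The statement is a recall of \cite[Theorem~3.1]{BSS05}, so the plan is to verify that the present assumptions $\rm(H1)$, $\rm(H2)$ and $\rm(H3)$ meet those of that theorem and then invoke it. This is routine: $\rm(H3)$ provides a bounded, compactly supported single-site profile (whose sign is immaterial for the spectrum below $0$); $\rm(H2)$ provides i.i.d.\ nonnegative, almost surely bounded coupling constants independent of the Bernoulli occupation variables $\{\xi_{i}\}_{i\in\Z^{d}}$; and $\rm(H1)$ provides polynomially decaying occupation probabilities $p_{i}\sim(1+|i|)^{-\al}$ with decay rate in the range required there. Then $\si_{ac}(H_{\om})\cap(-\infty,0)=\emptyset$ for a.e.\ $\om\in\Om$ is exactly the conclusion of \cite[Theorem~3.1]{BSS05}. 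Since the proposition enters the paper only as an external input to the proof of Theorem \ref{thm-main}, this direct citation is the route I would take.

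For completeness I would also indicate the mechanism behind that result. The analytic ingredient is that for $z$ in a complex neighbourhood of any compact set $K\subset(-\infty,0)$ the free resolvent kernel decays exponentially, $|k_{z}(x,y)|\lesssim_{K}e^{-c_{K}|x-y|}$ with $c_{K}>0$ uniform as $\Im z\da 0$ (the Combes--Thomas estimate, also readable from the Bessel representation \eqref{int-kernel-free} since the energies in $K$ avoid $[0,\infty)$). The geometric/probabilistic ingredient is that, with the stated decay of $p_{i}$, one can produce scales $R_{n}\ra\infty$ and widths $w_{n}\ra\infty$ such that the pairwise disjoint annuli $A_{n}=\{x\in\R^{d}:R_{n}<|x|<R_{n}+w_{n}\}$ contain, on average, $O(1)$ bumps; by independence of the $\xi_{i}$ and Borel--Cantelli, almost surely $V_{\om}$ vanishes identically on $A_{n}$ for infinitely many $n$. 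On each such annulus $H_{\om}=H_{0}=-\De$ has no spectrum below $0$, and feeding the exponential decay of $R_{0}$ through a geometric resolvent equation forces any generalized eigenfunction at a negative energy to be contracted by a factor $e^{-c_{K}w_{n}}$ across $A_{n}$; since $w_{n}\ra\infty$ and this recurs infinitely often, such a function is square-integrable near infinity, hence a genuine eigenfunction. Consequently the spectral measure of $H_{\om}$ on $(-\infty,0)$ is purely singular, which is stronger than the asserted absence of an absolutely continuous component.

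The main obstacle, were one to reprove the statement rather than cite it, is the quantitative matching of these two ingredients: one needs $c_{K}$ bounded below uniformly over the energy window, and one needs the bump-free annuli to have widths $w_{n}\ra\infty$ with probability one. The latter is where the precise decay rate of $p_{i}$ is used: the annulus $A_{n}$ carries $\sim R_{n}^{d-1}w_{n}$ lattice sites, each with $p_{i}\sim R_{n}^{-\al}$, so $w_{n}$ can be pushed to infinity while keeping $\sum_{i\in A_{n}\cap\Z^{d}}p_{i}$ bounded exactly when the decay exponent exceeds $d-1$ (one then takes $w_{n}\sim R_{n}^{\al-(d-1)}$). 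With both ingredients in hand, $\si_{ac}(H_{\om})\cap(-\infty,0)=\emptyset$ for a.e.\ $\om\in\Om$ follows from any standard criterion for the absence of absolutely continuous spectrum, e.g.\ an eigenfunction-expansion/Schnol-type argument or Kato's theory applied on bounded pieces.
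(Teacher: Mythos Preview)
Your approach is correct and coincides with the paper's: Proposition~\ref{prop-absence-ac} is not proved in the paper at all but is simply attributed to \cite[Theorem~3.1]{BSS05}, with the general form recorded in Proposition~\ref{prop-ac-absence}. Your supplementary sketch of the mechanism (Combes--Thomas decay plus almost-sure bump-free annuli via Borel--Cantelli) is not in the paper but is an accurate summary of the argument in \cite{BSS05}.
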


A more general version of Proposition \ref{prop-absence-ac} is stated in Proposition \ref{prop-ac-absence}.

We now prove Theorem \ref{thm-main}.

\begin{proof}[Proof of Theorem \ref{thm-main}]
By Theorem \ref{thm-free-res} and Theorem \ref{thm-perturb-res}, for any compact interval $I\subset(0,\infty)$ there exists a set $\Om_{I}\subset\Om$ of full probability such that for any $\om\in\Om_{I}$,  there hold
\begin{equation}\label{lap-proof}
\sup_{\la\in I,\ep\in(0,1]}\|F_{\om}(\la+i\ep)\|<\infty\quad\text{and}\quad\sup_{\la\in I,\ep\in(0,1]}\|P_{\om}(\la+i\ep)\|<\infty.
\end{equation}
Now, let $\{I_{n}\}_{n\in\N}$ be a sequence of compact intervals such that $I_{1}\subset I_{2}\subset\cdots\subset I_{n}\subset\cdots$ and $(0,\infty)=\cup_{n\in\N}I_{n}$. Denote by $\{\Om_{I_{n}}\}_{n\in\N}$ the corresponding sequence of sets of full probability. Set
\begin{equation*}
\Om_{*}=\bigcap_{n\in\N}\Om_{I_{n}}.
\end{equation*}
Then, $\P\{\Om_{*}\}=1$ and for any $\om\in\Om_{*}$, \eqref{lap-proof} with $I$ replaced by any $I_{n}$ is true. This clearly implies Theorem \ref{thm-main}$\rm(i)$.

We now fix any $\om\in\Om_{*}$. For the existence and completeness of local wave operators $s\mbox{-}\lim_{t\ra\pm\infty}e^{iH_{\om}t}e^{-iH_{0}t}\chi_{I}(H_{0})$ for some compact interval $I\subset(0,\infty)$, we invoke \cite[Theorem XIII.31]{RS78}. To do so, writing $H_{\om}-H_{0}=-\sqrt{-V_{\om}}\sqrt{-V_{\om}}$, since clearly $\sqrt{-V_{\om}}$ is both $H_{0}$-bounded and $H_{\om}$-bounded, we only need to show that $\sqrt{-V_{\om}}$ is both $H_{0}$-smooth and $H_{\om}$-smooth on $I$.

For the $H_{0}$-smoothness and $H$-smoothness of $\sqrt{-V_{\om}}$ on $I$, \cite[Theorem XIII.30]{RS78} says that it suffices to show that
\begin{equation*}
\begin{split}
\sup_{\la\in I,\ep\in(0,1]}\|\sqrt{-V_{\om}}R_{0}(\la+i\ep)\sqrt{-V_{\om}}\|<\infty,\\
\sup_{\la\in I,\ep\in(0,1]}\|\sqrt{-V_{\om}}R_{\om}(\la+i\ep)\sqrt{-V_{\om}}\|<\infty,
\end{split}
\end{equation*}
which are the statement of the first part of the theorem. Thus, we have shown the existence and completeness of local wave operators, that is, for any $\om\in\Om_{*}$, the strong limits
\begin{equation}\label{local-wave-op}
s\mbox{-}\lim_{t\ra\pm\infty}e^{iH_{\om}t}e^{-iH_{0}t}\chi_{I}(H_{0})
\end{equation}
\begin{equation}\label{local-inverse-wave-op}
s\mbox{-}\lim_{t\ra\pm\infty}e^{iH_{0}t}e^{-iH_{\om}t}\chi_{I}(H_{\om})
\end{equation}
exist for any compact interval $I\subset(0,\infty)$.

For the existence and completeness of wave operators, we first note that \eqref{local-wave-op} implies the existence of wave operators, i.e.,
\begin{equation*}
s\mbox{-}\lim_{t\ra\pm\infty}e^{iH_{\om}t}e^{-iH_{0}t}\,\,\text{exist for all}\,\,\om\in\Om_{*}
\end{equation*}
and \eqref{local-inverse-wave-op} implies
\begin{equation}\label{inverse-wave-op-1}
s\mbox{-}\lim_{t\ra\pm\infty}e^{iH_{0}t}e^{-iH_{\om}t}P_{ac}(H_{\om})\chi_{[0,\infty)}(H_{\om})\,\,\text{exist for all}\,\,\om\in\Om_{*},
\end{equation}
where $P_{ac}(H_{\om})$ is the projection onto the a.c. subspace of $H_{\om}$. To prove the existence of inverse wave operators, by Proposition \ref{prop-absence-ac}, we let $\Om_{**}$ be the set of full probability such that $\si_{ac}(H_{\om})\cap(-\infty,0)=\emptyset$  for all $\om\in\Om_{**}$.
In particular,
\begin{equation}\label{absence-of-ac}
P_{ac}(H_{\om})\chi_{(-\infty,0)}(H_{\om})=0\,\,\text{for all}\,\,\om\in\Om_{**}.
\end{equation}
We then conclude from \eqref{inverse-wave-op-1} and \eqref{absence-of-ac} that the inverse wave operators
\begin{equation*}
s\mbox{-}\lim_{t\ra\pm\infty}e^{iH_{0}t}e^{-iH_{\om}t}P_{ac}(H_{\om})\,\,\text{exist for all}\,\,\om\in\Om_{*}\cap\Om_{**}.
\end{equation*}
Consequently, for any $\om\in\Om_{*}\cap\Om_{**}$, the wave operators $s\mbox{-}\lim_{t\ra\pm\infty}e^{iH_{\om}t}e^{-iH_{0}t}$ exist and are complete.
\end{proof}

%%%%%%%%%%%%%%%%%%%%%%%%%%%%%%%%%%%%%%%%%%

\subsection{Proof of Theorem \ref{thm-main-4}}\label{sec-proof-of-main-thm-4}

Theorem \ref{thm-main-4} follows from the arguments as in the proof of Theorem \ref{thm-main}. We sketch the proof by pointing out the differences, which are mainly caused by the fact that ${\supp}(u_{i})$, ${i\in\Z^{d}}$ are no longer pairwise disjoint. For notational simplicity, we will omit ``for a.e. $\om\in\Om$" and focus on certain $\tilde{H}_{\om}$.

For $\la>0$ and $\ep>0$, define
\begin{equation*}
\begin{split}
\tilde{F}_{\om}(\la+i\ep)&=|\tilde{V}_{\om}|^{1/2}(H_{0}-\la-i\ep)^{-1}\tilde{V}_{\om}^{1/2},\\
\tilde{P}_{\om}(\la+i\ep)&=|\tilde{V}_{\om}|^{1/2}(\tilde{H}_{\om}-\la-i\ep)^{-1}\tilde{V}_{\om}^{1/2},
\end{split}
\end{equation*}
where $\tilde{V}_{\om}^{1/2}={\sgn}(\tilde{V}_{\om})|\tilde{V}_{\om}|^{1/2}$.

We first prove Theorem \ref{thm-main-4}$\rm(i)$. We claim that
\begin{equation}\label{claim-10}
\sup_{z\in\ol{\SS}}\|\tilde{F}_{\om}(z)\|<\infty,
\end{equation}
\begin{equation}\label{claim-11}
\tilde{F}_{\om}(z)\,\,\text{is continuous on}\,\,\ol{\SS}.
\end{equation}
Note that \eqref{claim-11} is a simple consequence of the Lebesgue dominated convergence as in the proof of Corollary \ref{cor-continuity-free}. Moreover, once \eqref{claim-10} and \eqref{claim-11} are established, we can readily check $\sup_{z\in\SS}\|\tilde{P}_{\om}(z)\|<\infty$ as in the proof of Theorem \ref{thm-perturb-res}, which then leads to the results. Thus, it suffices to show \eqref{claim-10}.

We only prove \eqref{claim-10} in the case $d\geq3$; the $d=2$ case is similar. By $\rm(H5)$, we may assume w.l.o.g that for $u_{i}\subset C_{R}(i)$ for some odd $R\geq3$, where $C_{r}(i)\subset\R^{d}$ is the open cube centered at $i$ with side length $r>0$. Thus,
\begin{equation}\label{disjoint-supp}
{\rm dist}(\supp(u_{i}),\supp(u_{j}))>0\,\,\text{if}\,\,|i-j|_{\infty}\geq R,
\end{equation}
where $|i-j|_{\infty}=\max_{k=1,\dots,d}|i_{k}-j_{k}|$. Then, for $\phi,\psi\in L^{2}$
\begin{equation*}
\begin{split}
|\lan\phi,\tilde{F}_{\om}(z)\psi\ran|&\lesssim\sum_{|i-j|_{\infty}\leq R-1}\sqrt{|\om_{i}|}\sqrt{|\om_{j}|}\iint_{\R^{d}\times\R^{d}}\frac{|\phi(x)|\sqrt{|u_{i}(x)|}|\sqrt{|u_{j}(y)|}|\psi(y)|}{|x-y|^{d-2}}dxdy\\
&\quad\quad+\sum_{|i-j|_{\infty}\geq R}\sqrt{|\om_{i}|}\sqrt{|\om_{j}|}\iint_{\R^{d}\times\R^{d}}\frac{|\phi(x)|\sqrt{|u_{i}(x)|}|\sqrt{|u_{j}(y)|}|\psi(y)|}{|x-y|^{(d-2)/2}}dxdy\\
&=\tilde{I}_{\om}+\tilde{II}_{\om},
\end{split}
\end{equation*}
where we used the fact $|\tilde{V}_{\om}|^{1/2}\leq\sum_{i\in\Z^{d}}\sqrt{|\om_{i}|}\sqrt{|u_{i}|}$ and Lemma \ref{estimate-of-kernel} together with \eqref{disjoint-supp}.

Using $\rm(H4)$ and $\rm(H5)$, as in the proof of \eqref{estimate-1}, we deduce from H\"{o}lder's inequality that
\begin{equation}\label{key-estimate}
\begin{split}
\tilde{I}_{\om}&\lesssim\sum_{|i-j|_{\infty}\leq R-1}\|\chi_{C_{R}(i)}\phi\|\|\chi_{C_{R}(j)}\psi\|\\
&\leq\sum_{i\in\Z^{d}}\bigg(\|\chi_{C_{R}(i)}\phi\|\sum_{j:|i-j|_{\infty}\leq R-1}\|\chi_{C_{R}(j)}\psi\|\bigg)\\
&\leq\bigg(\sum_{i\in\Z^{d}}\|\chi_{C_{R}(i)}\phi\|^{2}\bigg)^{1/2}\bigg(\sum_{i\in\Z^{d}}\bigg(\sum_{j:|i-j|_{\infty}\leq R-1}\|\chi_{C_{R}(j)}\psi\|\bigg)^{2}\bigg)^{1/2}
\end{split}
\end{equation}
\begin{lem}\label{lem-aux-1}
\begin{itemize}
\item[\rm(i)] For any $r>0$, $\sum_{i\in\Z^{d}}\|\chi_{C_{r}(i)}\phi\|^{2}\lesssim_{r}\|\phi\|^{2}$.
\item[\rm(ii)] For odd $r\geq3$, $\sum_{j:|i-j|_{\infty}\leq r-1}\|\chi_{C_{r}(j)}\psi\|^{2}\lesssim_{r}\|\chi_{C_{3r-2}(i)}\psi\|^{2}$
\end{itemize}
\end{lem}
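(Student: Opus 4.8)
The plan is to derive both bounds from elementary lattice-cube geometry together with Tonelli's theorem; no harmonic analysis is needed here. For part $\rm(i)$ the key point is that the family $\{C_{r}(i)\}_{i\in\Z^{d}}$ has bounded overlap: a point $x\in\R^{d}$ lies in $C_{r}(i)$ if and only if $i_{k}\in(x_{k}-r/2,\,x_{k}+r/2)$ for every $k=1,\dots,d$, and an open interval of length $r$ contains at most $\lceil r\rceil+1$ integers, so
\begin{equation*}
\sum_{i\in\Z^{d}}\chi_{C_{r}(i)}(x)\leq(\lceil r\rceil+1)^{d}\quad\text{for all }x\in\R^{d}.
\end{equation*}
First I would interchange the (nonnegative) sum and the integral by Tonelli, obtaining
\begin{equation*}
\sum_{i\in\Z^{d}}\|\chi_{C_{r}(i)}\phi\|^{2}=\int_{\R^{d}}\Big(\sum_{i\in\Z^{d}}\chi_{C_{r}(i)}(x)\Big)|\phi(x)|^{2}\,dx\leq(\lceil r\rceil+1)^{d}\,\|\phi\|^{2},
\end{equation*}
which is $\rm(i)$ with implicit constant $(\lceil r\rceil+1)^{d}$.

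For part $\rm(ii)$ the main step is a one-line containment: if $|i-j|_{\infty}\leq r-1$, then $C_{r}(j)\subset C_{3r-2}(i)$. Indeed, for $x\in C_{r}(j)$ and each coordinate $k$,
\begin{equation*}
|x_{k}-i_{k}|\leq|x_{k}-j_{k}|+|j_{k}-i_{k}|<\frac{r}{2}+(r-1)=\frac{3r-2}{2},
\end{equation*}
so $x\in C_{3r-2}(i)$. Hence $\chi_{C_{r}(j)}\leq\chi_{C_{3r-2}(i)}$ pointwise for each such $j$, and therefore $\|\chi_{C_{r}(j)}\psi\|^{2}\leq\|\chi_{C_{3r-2}(i)}\psi\|^{2}$. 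Summing over the $(2r-1)^{d}$ indices $j$ with $|i-j|_{\infty}\leq r-1$ yields
\begin{equation*}
\sum_{j:\,|i-j|_{\infty}\leq r-1}\|\chi_{C_{r}(j)}\psi\|^{2}\leq(2r-1)^{d}\,\|\chi_{C_{3r-2}(i)}\psi\|^{2},
\end{equation*}
which is $\rm(ii)$.

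There is no serious obstacle: the lemma is a routine packing/covering estimate, and the only care needed is bookkeeping the numerical constants and invoking Tonelli to sum under the integral sign, which is immediate since every term is nonnegative. The hypotheses ``$r$ odd'' and ``$r\geq3$'' are not used in the argument above; they merely record that in the application $r=R$ is the odd side length fixed in $\rm(H5)$ (for which the supports $\supp(u_{i})$ separate along the lattice, cf.\ \eqref{disjoint-supp}), and they are compatible with the conclusion since $3r-2$ is again odd and $\geq3$ when $r$ is, so $C_{3r-2}(i)$ is a cube of the same admissible type.
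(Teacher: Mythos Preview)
Your proof is correct and follows essentially the same approach as the paper's: both parts are reduced to the pointwise inequalities $\sum_{i\in\Z^{d}}\chi_{C_{r}(i)}\lesssim_{r}1$ and $\sum_{j:|i-j|_{\infty}\leq r-1}\chi_{C_{r}(j)}\lesssim_{r}\chi_{C_{3r-2}(i)}$, the latter via the containment $C_{r}(j)\subset C_{3r-2}(i)$. You have simply made the constants and the containment argument explicit, which the paper leaves implicit.
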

\begin{proof}
$\rm(i)$ follows from the fact that the function $\sum_{i\in\Z^{d}}\chi_{C_{r}(i)}$ is bounded with the bound depending on $r$. Similarly, $\rm(ii)$ follows from $\sum_{j:|i-j|_{\infty}\leq r-1}\chi_{C_{r}(j)}\lesssim_{r}\chi_{C_{3r-2}(i)}$.
\end{proof}
\begin{lem}\label{lem-aux-2}
$(\sum_{n=1}^{N}a_{n})^{2}\leq N\sum_{n=1}^{N}a_{n}^{2}$.
\end{lem}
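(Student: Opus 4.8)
The plan is to recognize this as nothing more than the Cauchy--Schwarz inequality applied to the vectors $(a_{1},\dots,a_{N})$ and $(1,\dots,1)$ in $\R^{N}$. Concretely, I would write
\begin{equation*}
\bigg(\sum_{n=1}^{N}a_{n}\bigg)^{2}=\bigg(\sum_{n=1}^{N}a_{n}\cdot 1\bigg)^{2}\leq\bigg(\sum_{n=1}^{N}a_{n}^{2}\bigg)\bigg(\sum_{n=1}^{N}1^{2}\bigg)=N\sum_{n=1}^{N}a_{n}^{2},
\end{equation*}
which is exactly the claimed bound. No hypotheses on the sign of the $a_{n}$ are needed, and equality holds precisely when all the $a_{n}$ are equal, though that refinement is not required here.

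An equally short alternative, if one prefers to avoid invoking Cauchy--Schwarz as a named result, is to expand the square directly: $\big(\sum_{n}a_{n}\big)^{2}=\sum_{m,n}a_{m}a_{n}$, and then bound each cross term via the elementary inequality $2a_{m}a_{n}\leq a_{m}^{2}+a_{n}^{2}$, which after summing over the $N^{2}$ pairs $(m,n)$ yields $\sum_{m,n}a_{m}a_{n}\leq\frac{1}{2}\sum_{m,n}(a_{m}^{2}+a_{n}^{2})=N\sum_{n}a_{n}^{2}$. Either route is a one-line computation; there is no genuine obstacle, so I would simply present the Cauchy--Schwarz line and move on. In the intended application (combining with Lemma~\ref{lem-aux-1}(ii) to control the inner sum in \eqref{key-estimate}) one takes $N$ to be the fixed number $\#\{j:|i-j|_{\infty}\leq R-1\}=(2R-1)^{d}$, a constant depending only on $R$ and $d$, so the factor $N$ is harmless.
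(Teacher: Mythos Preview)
Your proof is correct; the paper states this lemma without proof, treating it as elementary, so your Cauchy--Schwarz argument (or the direct expansion you also outline) is exactly what is intended.
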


Applying Lemma \ref{lem-aux-1} and Lemma \ref{lem-aux-2} to \eqref{key-estimate} gives
\begin{equation*}
\tilde{I}_{\om}\lesssim\|\phi\|\bigg(\sum_{i\in\Z^{d}}\sum_{j:|i-j|_{\infty}\leq R-1}\|\chi_{C_{R}(j)}\psi\|^{2}\bigg)^{1/2}\lesssim\|\phi\|\bigg(\sum_{i\in\Z^{d}}\|\chi_{C_{3R-2}(i)}\psi\|^{2}\bigg)^{1/2}\lesssim\|\phi\|\|\psi\|.
\end{equation*}

Using $\rm(H4)$, $\rm(H5)$ and \eqref{disjoint-supp} and estimating the expectation of the $L^{2}(\R^{d}\times\R^{d})$-norm of the integral kernel as in the proof of \eqref{estimate-2}, we deduce
\begin{equation*}
\E\Big\{\sup_{z\in\ol{\SS}}\|II_{\om,z}\|\Big\}\lesssim\|f\|_{\frac{2d}{d+1}},
\end{equation*}
where $f=\sum_{i\in\Z^{d}}p_{i}\chi_{C_{R}(i)}$. To compute the norm, we note
\begin{equation}\label{equality-10}
\sum_{i\in\Z^{d}}p_{i}\chi_{C_{R}(i)}=\sum_{i\in\Z^{d}}\bigg(\sum_{j\in C_{R}(i)}p_{j}\bigg)\chi_{C_{i}}\quad\text{on}\,\,\R^{d}\bs\Z^{d}.
\end{equation}
and
\begin{lem}\label{lem-decaying}
Let $\al>0$. Then, $p_{j}\sim_{R,\al}(1+|i|)^{-\al}$ for all $j\in C_{R}(i)$.
\end{lem}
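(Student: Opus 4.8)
The plan is to reduce the claim directly to the comparability $p_i \sim (1+|i|)^{-\al}$ from $\rm(H4)$, using the elementary geometric fact that any two points in the cube $C_R(i)$ differ by at most the fixed constant $R\sqrt{d}$, so that $|i|$ and $|j|$ differ by a bounded amount whenever $j \in C_R(i)$. The key observation is that the quantities $(1+|i|)^{-\al}$ and $(1+|j|)^{-\al}$ are comparable with constants depending only on $R$, $\al$ (and $d$), uniformly over all $i \in \Z^d$ and $j \in C_R(i)$.

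First I would fix $i \in \Z^d$ and $j \in C_R(i)$. Since $C_R(i)$ is the open cube centered at $i$ with side length $R$, we have $|i - j| < \tfrac{R\sqrt d}{2} =: \rho_R$, a constant depending only on $R$ and $d$. By the triangle inequality, $|j| \le |i| + \rho_R$ and $|i| \le |j| + \rho_R$, hence
\begin{equation*}
1 + |j| \le 1 + |i| + \rho_R \le (1+\rho_R)(1+|i|)
\quad\text{and}\quad
1 + |i| \le (1+\rho_R)(1+|j|).
\end{equation*}
Raising to the power $-\al$ (note $\al > 0$, so $t \mapsto t^{-\al}$ is decreasing on $(0,\infty)$) gives
\begin{equation*}
(1+\rho_R)^{-\al}(1+|i|)^{-\al} \le (1+|j|)^{-\al} \le (1+\rho_R)^{\al}(1+|i|)^{-\al},
\end{equation*}
that is, $(1+|j|)^{-\al} \sim_{R,\al} (1+|i|)^{-\al}$ with constants $(1+\rho_R)^{\pm\al}$ depending only on $R$, $\al$, $d$.

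Finally I would invoke $\rm(H4)$, which asserts $p_j \sim (1+|j|)^{-\al}$ with universal constants; combining this with the comparability just established yields $p_j \sim_{R,\al} (1+|i|)^{-\al}$ for all $j \in C_R(i)$, uniformly in $i$, as claimed. There is no real obstacle here: the only point to be slightly careful about is keeping track of the dependence of the implicit constants on $R$ (and $d$), since the number of lattice points in $C_R(i)$ and the diameter of that cube both grow with $R$, and these constants will be absorbed into the estimates of $\|f\|_{\frac{2d}{d+1}}$ that follow via \eqref{equality-10}.
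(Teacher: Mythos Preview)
Your proof is correct and follows essentially the same route as the paper: both arguments use $p_j\sim(1+|j|)^{-\al}$ from $\rm(H4)$ and then bound the ratio $(1+|i|)/(1+|j|)$ above and below via the triangle inequality and the bound $|i-j|\lesssim R$ for $j\in C_{R}(i)$. Your version is slightly more explicit about the diameter constant $\rho_R=R\sqrt{d}/2$ and the dependence on $d$, but the idea is identical.
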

\begin{proof}
Note
\begin{equation*}
p_{j}\sim\frac{1}{(1+|j|)^{\al}}=\frac{1}{(1+|i|)^{\al}}\bigg(\frac{1+|i|}{1+|j|}\bigg)^{\al}.
\end{equation*}
The lemma then follows since for $j\in C_{R}(i)$
\begin{equation*}
\frac{1+|i|}{1+|j|}\leq\frac{1+|j|+|i-j|}{1+|j|}\lesssim1+R\quad\text{and}\quad\frac{1+|j|}{1+|i|}\lesssim1+R.
\end{equation*}
\end{proof}

By \eqref{equality-10} and Lemma \ref{lem-decaying}, $\al>\frac{d+1}{2}$ ensures
\begin{equation*}
\|f\|_{\frac{2d}{d+1}}\lesssim\bigg\|\sum_{i\in\Z^{d}}\frac{\chi_{C_{i}}}{(1+|i|)^{\al}}\bigg\|_{\frac{2d}{d+1}}=\bigg(\sum_{i\in\Z^{d}}\frac{1}{(1+|i|)^{\frac{2d\al }{d+1}}}\bigg)^{\frac{d+1}{2d}}<\infty.
\end{equation*}
This proves \eqref{claim-10} and finishes the proof of Theorem \ref{thm-main-4}$\rm(i)$.

The second statement in Theorem \ref{thm-main-4} is a simple consequence of the first statement and Proposition \ref{prop-ac-absence}.

%%%%%%%%%%%%%%%%%%%%%%%%%%%%%%%%%%%%%%%%%%%%%%%%%

\appendix

\section{Absence of a.c. spectrum below zero}

We recall a result of Boutet de Monvel, Stollmann and Stolz (see \cite[Theorem 3.1]{BSS05}).

\begin{prop}\label{prop-ac-absence}
Consider the Schr\"{o}dinger operator with random potentials
\begin{equation*}
H_{\om}=H_{0}+\sum_{i\in\Z^{d}}\om_{i}u_{i}
\end{equation*}
satisfying the following conditions:
\begin{itemize}
\item[\rm(i)] $\{u_{i}\}_{i\in\Z^{d}}$ are real-valued functions and there's an open set $B$ containing $0$ such that $\supp(u_{i})\subset i+B$ for all $i\in\Z^{d}$;
\item[\rm(ii)] $\sup_{i\in\Z^{d}}\|u_{i}\|_{p}<\infty$ for $p\geq2$ if $d\leq 3$ and $p>\frac{d}{2}$ if $d>3$;
\item[\rm(iii)] $\{\om_{i}\}_{i\in\Z^{d}}$ are independent random variables on some probability space $(\Om,\BB,\P)$ such that there are $m<0<M$ such that $\P\{\om\in\Om|\om_{i}\in[m,M]\}=1$ for all $i\in\Z^{d}$ and for all $\ep>0$,
\begin{equation*}
\P\{\om\in\Om||\om_{i}|\geq\ep\}\sim(1+|i|)^{-\al},\quad\al>d-1.
\end{equation*}
\end{itemize}
Then, for a.e. $\om\in\Om$, $\si_{ac}(H_{\om})\cap(-\infty,0)=\emptyset$.
\end{prop}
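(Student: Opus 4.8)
The statement is \cite[Theorem~3.1]{BSS05}; I sketch the argument one would give. It splits into a \emph{probabilistic step}, showing that under hypothesis (iii) the potential $V_\om=\sum_{i\in\Z^d}\om_iu_i$ almost surely displays a ``sparse barrier'' geometry, and a \emph{deterministic step}, showing that any Schr\"{o}dinger operator whose potential has such a geometry has no a.c.\ spectrum on $(-\infty,0)$.

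\emph{Probabilistic step.} The goal is to produce, for a.e.\ $\om$, radii $a_1<b_1<a_2<b_2<\cdots\to\infty$ with $L_n:=b_n-a_n\to\infty$ such that the form-bound of $V_\om\chi_{A_n}$ relative to $H_0$ tends to $0$, where $A_n=\{x\in\R^d:a_n<|x|<b_n\}$. Fix $\ep_n\da0$ and $L_n\uparrow\infty$ beforehand. Since $\supp(u_i)\subset i+B$ has bounded overlap, the local $L^p$-norm of $V_\om$ on $A_n$ is $\lesssim\max\{|\om_i|:\supp(u_i)\cap A_n\neq\emptyset\}\cdot\sup_i\|u_i\|_p$, so by (ii) it is enough that all these $\om_i$ lie below $\ep_n$. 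For the lattice shell $S_n$ of those indices, sitting at radius $\sim R_n$, independence gives
\[
\P\bigl\{|\om_i|<\ep_n\ \text{for all}\ i\in S_n\bigr\}=\prod_{i\in S_n}\bigl(1-\P\{|\om_i|\geq\ep_n\}\bigr)\geq\exp\Bigl(-c(\ep_n)\sum_{i\in S_n}(1+|i|)^{-\al}\Bigr),
\]
and $\sum_{i\in S_n}(1+|i|)^{-\al}\lesssim L_nR_n^{\,d-1-\al}$. As $\al>d-1$, one may pick $R_n\uparrow\infty$ rapidly enough that the shells $S_n$ are pairwise disjoint and $c(\ep_n)L_nR_n^{\,d-1-\al}\leq1$ for every $n$; the events above are then independent with probabilities bounded below, so infinitely many occur almost surely by the divergence half of the Borel-Cantelli lemma. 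Relabelling gives the annuli.

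\emph{Deterministic step.} One shows: if $V$ is uniformly locally $H_0$-form-small and the form-bound of $V\chi_{A_n}$ tends to $0$ on annuli $A_n$ with $b_n-a_n\to\infty$, then $\sigma_{ac}(H_0+V)\cap(-\infty,0)=\emptyset$. Fix $E<0$. For $n$ large, the Dirichlet restriction $H^{A_n}$ of $H_0+V$ to the thin annulus $A_n$ is a small form perturbation of the nonnegative Dirichlet Laplacian, so $\sigma(H^{A_n})\subset[-|E|/2,\infty)$, whence ${\rm dist}(E,\sigma(H^{A_n}))\geq|E|/2$ and, by the Combes-Thomas estimate, every $L^2$-solution of $(H_0+V-E)\psi=0$ on $A_n$ decays across $A_n$ by a factor $\lesssim e^{-c\sqrt{|E|}\,L_n}$. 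Next one decouples $H_0+V$ along the mid-sphere of each $A_n$: with cutoffs chosen so their gradients are supported inside the $A_n$'s, the geometric resolvent identity rewrites $\lan f,(H_0+V-E-i\eta)^{-1}f\ran$, for $f$ compactly supported in $B_{a_1}$, as $\lan f,(H^{\rm int}_n-E-i\eta)^{-1}f\ran$ --- whose boundary value as $\eta\da0$ is real for a.e.\ $E$, $H^{\rm int}_n$ being a bounded-domain operator with pure point spectrum --- plus a surface correction carried across the barrier $A_n$ and hence weighted by the small transmission factor above. Exploiting the whole infinite tower of barriers, together with $\Im\lan f,(H_0+V-E-i\eta)^{-1}f\ran=\eta\,\|(H_0+V-E-i\eta)^{-1}f\|^2\geq0$ to organise the mass of the resolved vector shell by shell, one deduces $\lim_{\eta\da0}\eta\,\|(H_0+V-E-i\eta)^{-1}f\|^2=0$ for a.e.\ $E<0$ and all compactly supported $f$; since such $f$ are dense, the a.c.\ part of the spectral measure vanishes on $(-\infty,0)$.

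\emph{Main obstacle.} The hard part is the quantitative decoupling in dimension $d\geq2$. In one dimension the ``sphere'' is two points, the surface correction is finite rank, and the argument reduces to the Simon-Spencer barrier mechanism; for $d\geq2$ the correction is not even trace class, so it must be estimated directly, combining the exponential transmission factor with interior elliptic (Caccioppoli-type) estimates for $\nabla(H_0+V-E-i\eta)^{-1}f$, and --- crucially --- the estimate must be propagated through the \emph{entire} barrier sequence rather than a single annulus, because each finite-box operator $H^{\rm int}_n$ itself develops eigenvalues arbitrarily close to the given $E$; one has to keep the resulting bound summable and stable as $\eta\da0$. A further point is that the conclusion is required for every $E<0$ while the Combes-Thomas rate $\sqrt{|E|}$ degenerates as $E\uparrow0$, which is exactly why one needs $L_n\to\infty$ and $\ep_n\to0$ and not fixed values.
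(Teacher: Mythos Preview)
The paper does not supply its own proof of this proposition; it appears in the appendix purely as a recalled result, with the attribution to \cite[Theorem~3.1]{BSS05} already made in the body of the paper. Your proposal correctly identifies the source and, going beyond what the paper itself does, sketches the two-step structure of the argument in \cite{BSS05}: a Borel--Cantelli construction producing arbitrarily wide annular barriers on which the potential is uniformly small, followed by a deterministic decoupling/tunnelling argument showing that such barrier geometry suppresses the a.c.\ spectrum below zero. That outline is faithful to the strategy of \cite{BSS05}, so there is nothing in the present paper to compare it against.
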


%%%%%%%%%%%%%%%%%%%%%%%%%%%%%%%%%%%%%%%%%%%%%%%%%%%

\end{document}